\documentclass[11pt]{amsart}

\usepackage{amssymb}
\usepackage{amsthm}
\usepackage{amsmath}
\usepackage{graphicx}
\usepackage{amsaddr}
\usepackage{comment}
\usepackage[usenames,dvipsnames]{xcolor}
\usepackage[margin = 1in]{geometry}
\usepackage{enumerate}

\usepackage{lineno}

\usepackage{lipsum}
\usepackage{amsfonts}
\usepackage{graphicx}
\usepackage{epstopdf}
\usepackage{algorithmic}
\ifpdf
  \DeclareGraphicsExtensions{.eps,.pdf,.png,.jpg}
\else
  \DeclareGraphicsExtensions{.eps}
\fi


\usepackage{amsopn}

\usepackage{enumerate}
%


 \newtheorem{thm}{Theorem}[section]
 
 \newtheorem{lem}[thm]{Lemma}
 \newtheorem{prop}[thm]{Proposition}
\newtheorem{clm}[thm]{Claim}

 \theoremstyle{definition}
 \newtheorem{defn}{Definition}

 \theoremstyle{remark}
 \newtheorem{rem}{Remark}

 \numberwithin{equation}{section}

\newcommand{\per}{\text{per}}

\newcommand{\ud}{\,\mathrm{d}}

\newcommand{\Hper}{H_{\per}}
\newcommand{\Lper}{L_{\per}}
\newcommand{\Lty}{L^{\infty}}
\newcommand{\uv}{u_{\varepsilon}}
\newcommand{\uvt}{u_{\varepsilon t}}
\newcommand{\uvk}{u_{\varepsilon_k}}

\newcommand{\bdt}{b_\delta(t)}

\newcommand{\Dld}{D_{1\delta}(t)}
\newcommand{\Dgd}{D_{2\delta}(t)}
\newcommand{\vark}{\varepsilon_k}

\newcommand{\dpstyle}{\displaystyle}

\newcommand{\ale}{a.e.\,\,}

\newcommand{\nn}{\nonumber}

\newcommand{\blue}{ }



\begin{document}

\title[Weak Solution of a continuum model for vicinal surface in the ADL regime]{Weak solution of a
  continuum model for vicinal surface in the attachment-detachment-limited regime}

\author{Yuan Gao}
\address{School of
Mathematical Sciences\\
   Fudan  University,
Shanghai 200433, P.R.\ China\\Department of Mathematics and Department of
  Physics\\Duke University,
  Durham NC 27708, USA}
\email{gaoyuan12@fudan.edu.cn}
\author{Jian-Guo Liu}
\address{Department of Mathematics and Department of
  Physics\\Duke University,
  Durham NC 27708, USA}
\email{jliu@phy.duke.edu}

\author{Jianfeng Lu}
\address{Department of Mathematics, Department of
  Physics, and Department of Chemistry\\Duke University, Box 90320,
  Durham NC 27708, USA}
\email{jianfeng@math.duke.edu}

\date{\today}

\begin{abstract}
  We study in this work a continuum model derived from 1D
  attachment-detachment-limited (ADL) type step flow on vicinal
  surface,
  \begin{equation*}
    u_t=-u^2(u^3)_{hhhh},
  \end{equation*}
  where $u$, considered as a function of step height $h$, is the step
  slope of the surface.  We formulate a notion of weak
  solution to this continuum model and prove the existence of a global
  weak solution, which is positive almost everywhere. We also
  study the long time behavior of weak solution and prove it converges
  to a constant solution as time goes to infinity.  The space-time
  H\"older continuity of the weak solution is also discussed as a
  byproduct.
\end{abstract}



\maketitle

\section{Introduction}
During the heteroepitaxial growth of thin films, the evolution of the
crystal surfaces involves various structures.  Below the roughening
transition temperature, the crystal surface can be well characterized
as steps and terraces, together with adatoms on the terraces. Adatoms
detach from steps, diffuse on the terraces until they meet one of the
steps and reattach again, which lead to a step flow on the crystal
surface. The evolution of individual steps is described mathematically
by the Burton-Cabrera-Frank (BCF) type models \cite{BCF}; see
\cite{Duport1995a, Duport1995b} for extensions to include elastic
effects.  Denote the step locations at time $t$ by
$x_i(t), i \in \mathbb{Z}$, where $i$ is the index of the
steps. Denote the height of each step as $a$. For one dimensional
vicinal surface (i.e., monotone surface), if we do not consider the
deposition flux, the original BCF type model, after
non-dimensionalization, can be written as (we set some physical
constants to be $1$ for simplicity):
\begin{equation}\label{BCF}
  \dot{x}_i=\frac{D}{ka^2}\bigg(\frac{\mu_{i+1}-\mu_i}{x_{i+1}-x_i+\frac{D}{k}}
  -\frac{\mu_{i}-\mu_{i-1}}{x_i-x_{i-1}+\frac{D}{k}}\bigg), \text{ for }1\leq i\leq N.
\end{equation}
where $D$ is the terrace diffusion constant, $k$ is the hopping rate of an adatom to the upward or downward step, and $\mu$ is the chemical potential whose expression ranges under different assumption.
 Often two limiting cases of the classical BCF type model \eqref{BCF} were considered.
See \cite{Tersoff1995,Tersoff1998} for diffusion-limited (DL) case and see \cite{Isr2000,She2011} for attachment-detachment-limited (ADL) case.

In DL regime, the dominated dynamics is diffusion across the terraces, i.e. $\frac{D}{k}<< x_{i+1}-x_i$, so the step-flow ODE becomes
\begin{equation}\label{ODE1}
  \dot{x}_i=\frac{D}{ka^2}\bigg(\frac{\mu_{i+1}-\mu_i}{x_{i+1}-x_i}-\frac{\mu_{i}-\mu_{i-1}}{x_i-x_{i-1}}\bigg), \text{ for }1\leq i\leq N.
\end{equation}

In ADL regime, the diffusion across the terraces is fast, i.e. $\frac{D}{k}>>x_{i+1}-x_i$, so the dominated processes are the exchange of atoms at steps edges, i.e., attachment and detachment. The step-flow ODE in ADL regime becomes
\begin{equation}\label{ODE}
  \dot{x_i}=\frac{1}{a^2}\big(\mu_{i+1}-2\mu_i+\mu_{i-1}\big), \text{ for }1\leq i\leq N.
\end{equation}

 Those models are widely used for crystal growth of thin films
on substrates; see many scientific and engineering applications in the books
\cite{PimpinelliVillain:98, WeeksGilmer:79, Zangwill:88}.
As many of the film's properties and performances originate in their growth processes, understanding and mastering thin film growth is one of the major challenges of materials science.

Although these mesoscopic models provide details of discrete nature,
continuum approximation for the discrete models is also used to
analyze the step motion. {\blue They involve fewer variables than
discrete models so they can reveal the leading physics structure and are easier for numerical simulation.}  Many interesting continuum models can be
found in the literature on surface morphological evolution; see
\cite{Zang1990,Tang1997,Yip2001,Xiang2002,Xiang2004,Shenoy2002,Margetis2011,Leoni2014,our}
for one dimensional models and \cite{Margetis2006, Xiang2009} for two
dimensional models. The study of relation between the discrete ODE
models and the corresponding continuum PDE has raised lots of
interest. Driven by this goal, it is important to understand the
well-posedness and properties of the solutions to those continuum
models.

For a general surface with peaks and valleys, the analysis of step
motion on the level of continuous PDE is complicated so we focus on a
simpler situation in this work: a monotone one-dimensional step train,
known as the vicial surface in physics literature.  In this case,
\textsc{Ozdemir, Zangwill} \cite{Zang1990} and \textsc{Al Hajj
  Shehadeh, Kohn and Weare} \cite{She2011} realized using the step
slope as a new variable is a convenient way to derive the continuum
PDE model
\begin{equation}\label{PDE}
  u_t=-u^2(u^3)_{hhhh},
\end{equation}
where $u$, considered as a function of step height $h$, is the step
slope of the surface.  We validate this continuum model by formulating
a notion of weak solution. Then we prove the existence of such a weak
solution. The weak solution is also persistent, i.e., it is positive
(or negative) almost everywhere if non-negative (or non-positive)
initial data are assumed.

The starting point of this PDE is the 1D
attachment-detachment-limited (ADL) type models \eqref{ODE}.
To simplify the analysis,
 we will consider a periodic
train of steps in this work, i.e., we assume that
\begin{equation}\label{1}
  x_{i+N}(t) - x_{i}(t) = L, \qquad \forall\,i\in \mathbb{Z},\,\forall\,t\geq 0,
\end{equation}
where $L$ is a fixed length of the period. Thus, only the step
locations in one period $\{x_i(t), \, i = 1, \ldots, N\}$ are
considered as degrees of freedom. Since the vicinal surface is very
large in practice from the microscopic point of view, this is a good
approximation.  We set the height of each step as $a = \frac{1}{N}$,
and thus the total height changes across the $N$ steps in one period
is given by $1$. This choice is suitable for the continuum limit
$N\rightarrow \infty$.  See Figure 1 for an example of step train in
one period.
\begin{figure}[htbp]
\includegraphics[height=1.8in,width=5in]{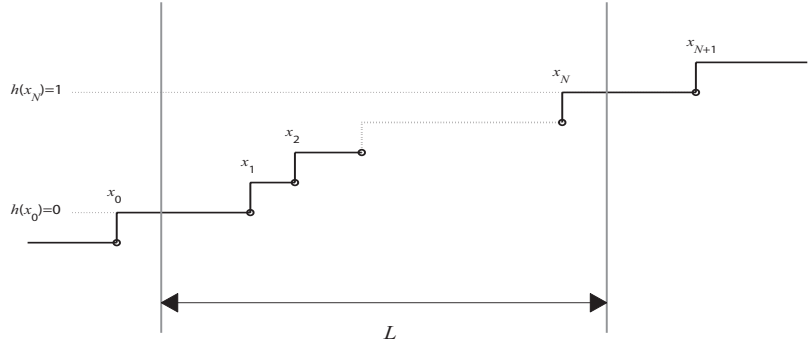}
\caption{ An example of step configurations with periodic boundary condition.}
\end{figure}

The general form of the (free) energy functional due to step
interaction is\footnote{In this work, we neglect long range elastic
  interactions between the steps in the model; related models with
  long range elastic interactions are briefly discussed below in later
  part of the introduction.}
\begin{equation}\label{Ff}
  F_N=a\sum_{i=0}^{N-1} f\big(\frac{x_{i+1}-x_i}{a}\big),
\end{equation}
where $f$ reflects the physics of step interaction.  Following the
convention in focusing on entropic and elastic-dipoles interaction
\cite{SSR1,SSR2}, we choose $f(r)=\frac{1}{2r^2}$. Hence each step
evolves by \eqref{ODE} with chemical potential $\mu_i$ defined as the
first variation of the step interaction energy
\begin{equation}\label{F_N}
F_N=\frac{1}{2}\sum_{i=0}^{N-1}\frac{a^3}{(x_{i+1}-x_i)^2},
\end{equation}
with respect to $x_i.$
That is
\begin{equation}\label{cp1}
  \mu_i=\frac{1}{a}\frac{\partial F_N}{\partial x_i}=\frac{a^2}{(x_{i+1}-x_i)^3}-\frac{a^2}{(x_i-x_{i-1})^3}, \text{ for }1\leq i\leq N.
\end{equation}
From the periodicity of $x_i$ in \eqref{1}, it is easy to see the periodicity of $\mu$ such that $\mu_i=\mu_{i+N}.$

When the step height $a \to 0$ or equivalently, the number of steps in
one period $N \to \infty$, from the viewpoint of surface slope,
\textsc{Al Hajj Shehadeh, Kohn and Weare} \cite{She2011} and
\textsc{Margetis, Nakamura} \cite{Margetis2011} studied the continuum
model \eqref{PDE}; see also \cite{Zang1990} for physical derivation in
general case. We recall their ideas in our periodic setup. Denote the
step slopes as
\begin{equation}\nn
u_i(t)=\frac{a}{x_{i+1}(t)-x_i(t)}, \text{ for }1\leq i\leq N.
\end{equation}
The periodicity of $x_i$ in \eqref{1} directly implies the periodicity of $u_i$, i.e. $u_i=u_{i+N}.$
Then by straight-forward calculation, we have the ODE for slopes
\begin{equation}\label{yODE}
  \dot{u}_i=-\frac{1}{a^4}u_i^2\Big[(u_{i+2}^3-2u_{i+1}^3+u_i^3)-2(u_{i+1}^3-2u_{i}^3+u_{i-1}^3)+(u_{i}^3-2u_{i-1}^3+u_{i-2}^3)\Big].
\end{equation}
Under the periodic setup, when considering step slope $u$ as a
function of $h$ in continuum model, $u$ has period $1$.  Keep in mind
the height of each step $x_i$ is $h_i=ia.$ It is natural to anticipate
that as $N\rightarrow \infty,$ the solution of the slope ODE
\eqref{yODE} should converge to the solution $u(h,t)$ of continuum
model \eqref{PDE}, which is 1-periodic with respect to step height
$h$.


By different methods, \cite{She2011} and \cite{Margetis2011}
separately studied the self-similar solution of ODE \eqref{yODE} and
PDE \eqref{PDE}. For monotone initial data, i.e.
$x_1(0)<x_2(0)<\cdots <x_N(0)$, \cite{She2011} proved the steps do not
collide and the global-in-time solution to ODE \eqref{yODE} (as well
as ODE \eqref{ODE}) was obtained in their paper. By introducing a
similarity variable, \cite{She2011} first discovered that the
self-similar solution is a critical point of a ``similarity energy'',
for both discrete and continuum systems. Then they rigorously prove
the continuum limit of self-similar solution and obtained the
convergence rate for self-similar solution.

However, as far as we know, the global-in-time validation of the
time-dependent continuum limit model \eqref{PDE} is still an open
question as stated in \cite{Kohnbook}. In fact, it is not even known
whether \eqref{PDE} has a well-defined, unique solution. Although the
positivity of solution to continuum model \eqref{PDE} corresponds to
the non-collision of steps in discrete model, which was proved in
\cite{She2011}; even a ``formal proof'' of positive global weak
solution in the time-dependent continuous setting has not been
established.

Our goal is to formulate a notion of weak solution and prove the existence of global weak solution. We also prove the almost everywhere positivity of the solution, which might help the study of global convergence of discrete model \eqref{ODE} to its continuum limit \eqref{PDE} in the future.
Moreover, we study the long time behavior of weak solutions and prove that all weak solutions converge to a constant as time goes to infinity.
The space-time H\"older continuity of the solution is also obtained.

{ \blue One
  of the key structures of the model is that it possesses the following two Lyapunov
  functions,
  \begin{equation}\label{F_u}
  F{\blue (u)}:=\frac{1}{2}\int_0^1 u^2 \ud h,
\end{equation}
and
\begin{equation}\label{E_u}
  E{\blue (u)}:=\int_0^1 \frac{1}{6}[(u^3)_{hh}]^2 \ud h.
\end{equation}
Then we have
\begin{equation}
  \frac{\delta F{\blue (u)} }{\delta u}=u,\quad
  \frac{\delta E{\blue (u)}}{\delta u}=u^2(u^3)_{hhhh},\nn
\end{equation}
and \eqref{PDE} can be recast as
\begin{equation}\label{EE1222}
  u_t=-\frac{\delta E{\blue (u)}}{\delta u}=-u^2\partial_{hhhh}\Big(u^2 \frac{\delta F{\blue (u)}}{\delta u}\Big).
\end{equation}

Since the homogeneous degree of $E(u)$ is $6$, one has
$$6E(u)=\int_0^1 u\frac{\delta E(u)}{\delta u}\ud h.$$
Then by \eqref{EE1222},we obtain
\begin{equation}\label{1221_2}
  \frac{\ud F(u)}{\ud t}+6 E(u)=\int_0^1 u\big(u_t+\frac{\delta E(u)}{\delta u}\big)\ud h=0.
\end{equation}
Notice that
\begin{equation}\label{1220E}
\frac{\ud E{\blue (u)}}{\ud t}=\int_0^1 \frac{\delta E{\blue (u)}}{\delta u} u_t \ud h=-\int_0^1 u_t^2\ud h \leq 0.
\end{equation}
Therefore, we also have the following dissipation structures:
\begin{equation}\label{1221_1}
  \frac{\ud E(u)}{\ud t}+D(u)=0,
\end{equation}
where $D:=\int_0^1 [u^2(u^3)_{hhhh}]^2 \ud h$.
From \eqref{1221_1} and \eqref{1221_2}, for any $T>0$, we obtain
  $$6TE(u(T,\cdot))\leq 6\int_0^T E(u(t,\cdot))\ud t\leq F(u(0,\cdot))-F(u(T,\cdot)),$$
which leads to the algebraic decay
\begin{equation}\label{1221_3}
E(u(T,\cdot))\leq \frac{F(u(0,\cdot))}{6T}, \text{ for any } T>0.
\end{equation}

The free energy $F$ is consistent with the discrete energy $F_N$ defined in \eqref{F_N} and
$E$ was first introduced in the work \cite{She2011}. We call it energy
dissipation rate $E$ due to its physical meaning \eqref{1221_2}, i.e., $E$ gives the rate at which the step free energy $F$ is
dissipated up to a constant. This relation between $E$ and $F$
  is important for proving the positivity, existence and long time behavior of weak solution to
\eqref{PDE}.

On the contrary, if we also had $E(u)\leq c D(u)$ (which does not hold
here), then \eqref{1221_1} would imply
$\frac{\ud E(u)}{\ud t}\leq -cE(u)$, i.e., $E$ is bounded by the
dissipation rate of itself.  This kind of structure would lead to an
exponential decay rate, which is widely used for convergence of weak
solution to its steady state, see e.g., \cite{villani}.  While we do
not have such a classical exponential decay structure, the two related
dissipation structures \eqref{1221_1}, \eqref{1221_2} are good enough
to get an algebraic decay \eqref{1221_3} and obtain the long time
behavior of weak solution; see Section \ref{sec4}.

 We also give a formal observation for the conservation law of $\frac{1}{u}
$ below. It gives the intuition to prove the positivity of weak solution to regularized problem, which leads to the almost everywhere positivity of weak solution to original problem; see Theorem \ref{global_th}.  Multiplying \eqref{PDE} by $\frac{1}{u^2}$ gives
\begin{equation}
  \frac{\ud }{\ud t}\int_0^1 \frac{1}{u} \ud h=\int_0^1 (u^3)_{hhhh} \ud h=0.
\end{equation}
Hence we know $\int_0^1 \frac{1}{u} \ud h$ is a constant of motion for classical solution.
}

One of the main difficulties for PDE \eqref{PDE} is that it becomes
degenerate-parabolic whenever $u$ approaches $0$. As it is not known
if solutions have singularities on the set $\{u=0\}$ or not, we adopt
a regularization method, $\varepsilon$-system, from the work of
\textsc{Bernis and Friedman} \cite{Friedman1990}. First, we define
weak solution in the spirit of \cite{Friedman1990}. Then we study the
$\varepsilon$-system and obtain an unique global weak solution to
$\varepsilon$-system. The positive lower bound of solution to
$\varepsilon$-system is important in the proof of existence of almost
everywhere positive weak solution to PDE \eqref{PDE}.  Observing the
energy dissipation rate $E$ defined in \eqref{E_u} and the
corresponding variational structure, we will make the natural choice
of using $u^3$ as the variable. Yet another difficulty arises since we
do not have lower order estimate for $u^3$ after
regularization. Therefore we need to adopt the \textit{a-priori}
assumption method and verify the \textit{a-priori} assumption by
calculating the positive lower bound of solutions to
$\varepsilon$-system. Finally, we prove the limit of solution to
$\varepsilon$-system is the weak solution to \eqref{PDE}.  {\blue  When it comes to
  establish two energy-dissipation inequalities for the weak solution
  $u$, singularities on set $\{u=0\}$ cause problem too. Hence we also need to take the advantage of the
  $\varepsilon$-system, which allows us avoiding the difficulty due to
  singularities, to obtain the two
  energy-dissipation inequalities.  }

While we prove the existence, the uniqueness of the weak solution is still an open question. Since we consider a degenerate problem not in divergence form, we have not been able to show the uniqueness after the solution touches zero, nor can we obtain any kind of conservation laws rigorously.

One of the closely related
models is the continuum model in DL regime (we set some physical constants to be $1$ for simplicity)
\begin{equation}\label{5n}
 h_t =  \Bigl(-a H(h_x)-\big(\frac{a^2}{h_x}+3h_x\big)h_{xx}\Bigr)_{xx},
 \end{equation}
 which was first proposed by \textsc{Xiang} \cite{Xiang2002}, who
 considered DL type model \eqref{ODE1} with a different chemical
 potential $\mu_i$. More specifically, an additional contribution
 from global step interaction is included besides the local
 terms in the free energy \eqref{Ff},
\begin{equation}\label{Ffn}
  F_N=a\sum_{i=0}^{N-1} f_1\big(\frac{x_{i+1}-x_i}{a}\big)+a^2\sum_{i=0}^{N-1}\sum_{j=0,j\neq i}^{N-1} f_2\big(\frac{x_{j}-x_i}{a}\big),
\end{equation}
with $f_1(r)=\frac{1}{2r^2}$ and $f_2(r)=a^2 \ln\vert r\vert$. While  the free energy $F_N$ is slightly different from that of \cite{Xiang2002},
where the first term $f_1$ is also treated as a global interaction, the formal continuum limit PDE are the same. As argued in
\cite{Xiang2004}, the second term $f_2$ comes from the misfit elastic
interaction between steps, and is hence higher-order in $a$ compared
with the broken bond elastic interaction between steps which
contributes to the first term. Note that \eqref{5n} is a PDE for the
height of the surface as a function of the position and the first two
terms involve the small parameter $a$. We include in the appendix some
alternative forms of the PDE \eqref{PDE}. In particular, when formally ignoring
these terms with small $a$-dependent amplitude, \eqref{5n} becomes
\begin{equation}\label{DL1}
 h_t = -\frac{3}{2}\bigl((h_x)^2\bigr)_{xxx},
\end{equation}
which is parallel to \eqref{eq:hn} in our case.
For the DL type PDE \eqref{DL1}, a fully rigorous understanding is available in \cite{Kohnbook,giga2010}.
\textsc{Kohn} \cite{Kohnbook} pointed out that
a rigorous understanding for the evolution of global solution to ADL type model \eqref{eq:hn} (as well as PDE \eqref{PDE}) is still open because the mobility $\frac{1}{h_x}$ in \eqref{eq:hn} (which equals $1$ in DL model) brings more difficulties.

Recently, \textsc{Dal Maso},
\textsc{Fonseca} and \textsc{Leoni} \cite{Leoni2014} studied the
global weak solution to \eqref{5n} by setting $a=1$ in the equation, i.e.,
\begin{equation}\label{5}
  h_t =  \Bigl(- H(h_x)-\big(3h_x+\frac{1}{h_x}\big)h_{xx}\Bigr)_{xx}.
\end{equation}
The work \cite{Leoni2014} validated \eqref{5} analytically by
verifying the almost everywhere positivity of $h_x$. Moreover,
\textsc{Fonseca}, \textsc{Leoni} and \textsc{Lu} \cite{Leoni2015}
obtained the existence and uniqueness of the weak solution to
\eqref{5}. However, also because the mobility $\frac{1}{h_x}$ (which equals $1$ in DL model) appears when the PDE is rewritten as $h$-equation \eqref{eq:hn}, there is little chance to recast it into {\blue an abstract evolution equation with maximal
monotone operator in reflexive Banach space by choosing other variables, which is the key
to the method in \cite{Leoni2015}. It is very challenged to apply the classical maximal monotone method to a non-reflexive Banach space,}
so we use different
techniques following \textsc{Bernis and Friedman} \cite{Friedman1990}
and the uniqueness is still open.

The remainder of this paper is arranged as follows. After defining the weak solution, Section \ref{sec3}
is devoted to prove the main Theorem \ref{global_th}. In Section
\ref{sec3.1}, we establish the well-posedness of the regularized
$\varepsilon$-system and study its properties. In Section \ref{sec3.2},
we study the existence of global weak solution to PDE \eqref{PDE} and
prove it is positive almost everywhere. In Section \ref{sec3.3}, we
obtain the space-time H\"older continuity of the weak
solution. Section \ref{sec4} considers the long time behavior of weak
solution. The paper ends with Appendix which include a few alternative
formulations of the PDEs based on other physical variables than the
slope.

\section{Global weak solution}\label{sec3}

In this section, we start to prove the global existence and almost everywhere positivity of weak solutions to PDE \eqref{PDE}.
In the following, with standard notations for Sobolev spaces,
denote
\begin{equation}
H^{m}_{\per}([0,1]):=\{u(h)\in H^m(\mathbb{R}); {\blue \,u(h+1)=u(h) \,\,a.e.\,h\in \mathbb{R} } \},
\end{equation}
{\blue and when $m=0$, we denote as $L^2_{\per}([0,1])$.}
We will study the continuum problem \eqref{PDE} in periodic setup.

Although we can prove the measure of $\{(t,x);u(t,x)=0\}$ is zero, we still have no information for it. To avoid the difficulty when $u=0$, we use a regularized method introduced by \textsc{Bernis and Friedman} \cite{Friedman1990}.
Since we do not know the situation in set $\{(t,x);u(t,x)=0\}$,
we need to define a set
\begin{equation}\label{PT}
  P_T:=(0,T)\times(0,1)\backslash \{(t,h);u(t,h)=0\}.
\end{equation}
As a consequence of \eqref{u+} and time-space H\"older
regularity estimates for $u^3$ in Proposition \ref{timeholder}, we know that $P_T$ is an open set and we can
define a distribution on $P_T$.
Recall the definition $E$ in \eqref{E_u}.
First we give the definition of weak solution to PDE \eqref{PDE}.
\begin{defn}
  For any $T>0$, we call a non-negative function $u(t,h)$ with regularities
  \begin{equation}\label{723_1}
    u^3\in \Lty([0,T];\Hper^2([0,1])),\quad u^2(u^3)_{hhhh}\in L^2(P_T),
  \end{equation}
  \begin{equation}\label{723_3}
    u_t\in L^2([0,T];\Lper^2([0,1])),\quad u^3\in C([0,T];\Hper^1([0,1])),
  \end{equation}
   a weak solution to PDE \eqref{PDE} with initial data $u_0$ if
  \begin{enumerate}[(i)]
    \item for any function $\phi\in C^\infty([0,T]\times\mathbb{R})$, which is $1$-periodic with respect to $h$, $u$ satisfies
    \begin{equation}\label{eq01}
      \int_0^T\int_0^1 \phi u_t \ud h \ud t+\int\int_{P_T}\phi{u^2}{(u^3)_{hhhh}}\ud h \ud t =0;
    \end{equation}
    \item the following {\blue first} energy-dissipation inequality holds
    \begin{equation}\label{E01}
      E(u(T,\cdot))+\int\int_{P_T}(u^2(u^3)_{hhhh})^2 \ud h \ud t \leq E(u(0,\cdot)).
    \end{equation}
    {\blue
    \item the following {\blue second} energy-dissipation inequality holds
    \begin{equation}\label{F01}
      F(u(T,\cdot))+6\int_0^TE(u(t,\cdot)) \ud t \leq F(u(0,\cdot)).
    \end{equation}
    }
  \end{enumerate}

\end{defn}

We now state the main result the global existence of weak solution to \eqref{PDE} as follows.
\begin{thm}\label{global_th}
  For any $T>0$, assume initial data $u^3_0\in \Hper^2([0,1])$, $\int_0^1 \frac{1}{u_0} \ud h =m_0<+\infty$ and $u_0\geq0$. Then there exists a global non-negative weak solution to PDE \eqref{PDE} with initial data $u_0$. Besides, we have
  \begin{equation}\label{u+}
    u(t,h)>0, \text{  for }\ale (t,h)\in[0,T]\times[0,1].
  \end{equation}
\end{thm}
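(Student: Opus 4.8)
The plan is to follow the regularization scheme of \textsc{Bernis and Friedman} \cite{Friedman1990}: build solutions of a non-degenerate $\varepsilon$-system, derive estimates uniform in $\varepsilon$, and pass to the limit, while tracking a conserved entropy that encodes the formal conservation law $\frac{\ud}{\ud t}\int_0^1\frac1u\,\ud h=\int_0^1(u^3)_{hhhh}\,\ud h=0$. For the $\varepsilon$-system, since the degeneracy sits in the mobility $u^2$, I would replace \eqref{PDE} by $u_t=-M_\varepsilon(u)(u^3)_{hhhh}$ with a mobility that is non-degenerate away from $0$, e.g.\ $M_\varepsilon(u)=\frac{u^4}{u^2+\varepsilon}$, a choice that recovers $u^2$ as $\varepsilon\to0$ and preserves the conservation structure: multiplying by $\frac{u^2+\varepsilon}{u^4}$ gives $\frac{\ud}{\ud t}\int_0^1\bigl(\frac1u+\frac{\varepsilon}{3u^3}\bigr)\ud h=-\int_0^1(u^3)_{hhhh}\,\ud h=0$. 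I would take regularized initial data $u_{0\varepsilon}$ with $u_{0\varepsilon}^3\to u_0^3$ in $\Hper^2([0,1])$, with a floor $u_{0\varepsilon}\geq c_\varepsilon>0$, $c_\varepsilon\to0$ chosen slowly enough that $m_{0\varepsilon}:=\int_0^1\bigl(\frac1{u_{0\varepsilon}}+\frac{\varepsilon}{3u_{0\varepsilon}^3}\bigr)\ud h$ stays bounded in $\varepsilon$ and $\int_0^1\frac1{u_{0\varepsilon}}\ud h\to m_0$.

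The core of Section \ref{sec3.1} is the well-posedness of this $\varepsilon$-system. As $M_\varepsilon$ still vanishes at $u=0$, I would run the a priori assumption argument: imposing a lower bound $u_\varepsilon\geq\sigma>0$ makes the equation uniformly fourth-order parabolic, so a solution can be constructed by Galerkin approximation in the Fourier basis of $\Lper^2([0,1])$ (possibly with an auxiliary regularization and a fixed-point step), local in time and then global via energy estimates. Testing by $\frac{\delta E}{\delta u}=u^2(u^3)_{hhhh}$ yields $\frac{\ud}{\ud t}E(u_\varepsilon)+\int_0^1\frac{u_\varepsilon^6}{u_\varepsilon^2+\varepsilon}\bigl((u_\varepsilon^3)_{hhhh}\bigr)^2\ud h=0$ (whence also $\int_0^T\!\!\int_0^1 u_{\varepsilon t}^2\,\ud h\,\ud t\le E(u_{0\varepsilon})$); testing by $u_\varepsilon$ and integrating by parts on the periodic interval yields $\frac{\ud}{\ud t}F(u_\varepsilon)+6E(u_\varepsilon)=\eta_\varepsilon(t)$ with an error $\eta_\varepsilon$ that is controlled and negligible as $\varepsilon\to0$; and the computation above gives the conserved entropy $\int_0^1\bigl(\frac1{u_\varepsilon(t)}+\frac{\varepsilon}{3u_\varepsilon(t)^3}\bigr)\ud h=m_{0\varepsilon}$. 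Together with a Gagliardo--Nirenberg bound for $\|u_\varepsilon\|_{\Lty}$ in terms of $E(u_0)$ and $F(u_0)$, these give uniform-in-$\varepsilon$ bounds $u_\varepsilon^3\in \Lty([0,T];\Hper^2([0,1]))$, $u_{\varepsilon t}\in L^2([0,T];\Lper^2([0,1]))$, and $\|1/u_\varepsilon\|_{\Lty([0,T];L^1([0,1]))}\leq m_{0\varepsilon}$. Finally, to close the a priori assumption I would produce a positive (in general $\varepsilon$-dependent) lower bound for $u_\varepsilon$ by combining the estimate $\int_0^1 u_\varepsilon^{-3}\,\ud h\leq 3m_{0\varepsilon}/\varepsilon$ from the conserved entropy with the uniform spatial H\"older-$\tfrac12$ continuity of $u_\varepsilon^3$ coming from the $\Hper^2$ bound.

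In Section \ref{sec3.2} I would let $\varepsilon\to0$. The uniform bounds together with Aubin--Lions and Arzel\`a--Ascoli produce a subsequence $\varepsilon_k\to0$ with $u_{\varepsilon_k}^3\to u^3$ strongly in $C([0,T];C^{0,\gamma}([0,1]))$ (so $u_{\varepsilon_k}\to u$ uniformly and a.e.), $(u_{\varepsilon_k}^3)_{hh}\rightharpoonup(u^3)_{hh}$ weakly-$\star$ in $\Lty([0,T];\Lper^2([0,1]))$, and $u_{\varepsilon_k t}\rightharpoonup u_t$ weakly in $L^2$; in particular the regularity \eqref{723_1}--\eqref{723_3} holds and $u^3(0,\cdot)=u_0^3$. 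On $P_T$, where $u$ is locally bounded below, the dissipation bound upgrades this to $(u_{\varepsilon_k}^3)_{hhhh}\rightharpoonup(u^3)_{hhhh}$ and $M_{\varepsilon_k}(u_{\varepsilon_k})(u_{\varepsilon_k}^3)_{hhhh}\rightharpoonup u^2(u^3)_{hhhh}$ in $L^2_{\loc}(P_T)$, so passing to the limit in the weak form of the $\varepsilon$-system gives \eqref{eq01}. Inequality \eqref{E01} follows from weak lower semicontinuity of $E$ and Fatou's lemma applied to the dissipation term (using $\tfrac{u^6}{u^2+\varepsilon}\nearrow u^4$), and \eqref{F01} follows similarly by passing to the limit in the $F$-identity and using $\int_0^T\eta_{\varepsilon_k}\,\ud t\to0$. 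The a.e.\ positivity is then immediate: for a.e.\ $t$, Fatou's lemma and a.e.\ convergence give $\int_0^1\frac1{u(t,h)}\ud h\le\liminf_k\int_0^1\frac1{u_{\varepsilon_k}(t,h)}\ud h\le\lim_k m_{0\varepsilon_k}=m_0<\infty$, so $u(t,\cdot)>0$ a.e., i.e.\ $u>0$ for \ale $(t,h)\in[0,T]\times[0,1]$, which is \eqref{u+}; this also shows $(0,T)\times(0,1)\setminus P_T$ has measure zero, and with the space--time H\"older continuity of $u^3$ (Proposition \ref{timeholder}) that $P_T$ is open, so all distributional integrals over $P_T$ in the definition of weak solution are meaningful.

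The step I expect to be the main obstacle is the positive lower bound for the $\varepsilon$-system that closes the a priori assumption: since \eqref{PDE} is not in divergence form the usual thin-film entropy method is unavailable, so one must exploit the particular algebraic structure --- multiplication by $\frac{u_\varepsilon^2+\varepsilon}{u_\varepsilon^4}$ producing the \emph{exact} conserved entropy --- and then convert its $L^1$/$L^3$ control of $1/u_\varepsilon$ into a genuine pointwise bound by a covering argument based on the H\"older regularity of $u_\varepsilon^3$. A related difficulty is that the mobility regularization which preserves the conservation of $\int\frac1u$ is not compatible with the exact identity $\frac{\ud}{\ud t}F(u)+6E(u)=0$, so the second energy--dissipation inequality \eqref{F01} has to be recovered in the limit rather than from an exact identity for the $\varepsilon$-system; and the singular set $\{u=0\}$ must be circumvented throughout by arguing on the strictly positive $\varepsilon$-system before passing to the limit.
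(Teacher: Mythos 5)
Your scheme is essentially the paper's: the same mobility regularization $u_t=-\frac{u^4}{u^2+\varepsilon}(u^3)_{hhhh}$, lifted initial data, the conserved perturbed entropy $\int_0^1\bigl(\frac1u+\frac{\varepsilon}{3u^3}\bigr)\ud h$, an a-priori-assumption bootstrap for a positive lower bound, and the limit passage via Aubin--Lions, weak convergence of $(u_\varepsilon^3)_{hhhh}$ where $u$ is bounded below, and lower semicontinuity for \eqref{E01}--\eqref{F01}. (Your ``$F$-identity with error $\eta_\varepsilon$'' is just the paper's exact identity \eqref{Fv01} for the perturbed free energy $F_\varepsilon(u)=\int_0^1\varepsilon\ln|u|\ud h+F(u)$, rearranged.) However, there is a genuine gap at precisely the step you flag as the main obstacle: your mechanism for the lower bound --- the entropy bound $\int_0^1 u_\varepsilon^{-3}\ud h\lesssim \varepsilon^{-1}$ combined with ``uniform spatial H\"older-$\frac12$ continuity of $u_\varepsilon^3$'' --- does not produce any lower bound. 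Indeed, if near the spatial minimum one only knows $u_\varepsilon^3(h)\le u_m^3+C|h-h^\star|^{1/2}$, then $\int_0^1\bigl(u_m^3+C|h-h^\star|^{1/2}\bigr)^{-1}\ud h\le 4/C$ is bounded \emph{independently of} $u_m$, so the constraint $\int u_\varepsilon^{-3}\le 3C_{m_0}/\varepsilon$ is vacuous for small $\varepsilon$ and forces nothing. The paper's Lemma \ref{lem2} is exactly the missing ingredient: since $(u_\varepsilon^3)_h$ vanishes at the minimum point, the $H^2$ bound gives the improved exponent $u_\varepsilon^3(h)-u_m^3\le\frac23\|(u_\varepsilon^3)_{hh}\|_{L^2}|h-h^\star|^{3/2}$, and only with the power $3/2$ does the covering computation yield $u_\varepsilon\ge c\,\varepsilon$ as in \eqref{720_2}. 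This quantitative strength is then used three times: to beat the a-priori threshold $\varepsilon^{4/3}$ and close the bootstrap, to make $\varepsilon\int_0^1|\ln u_\varepsilon|\ud h\lesssim\varepsilon|\ln\varepsilon|\to0$ (needed both for the uniform $L^\infty_tL^2_h$ bound via the perturbed free energy and to discard the log term when deducing \eqref{F01}), and to justify your claim that $\eta_\varepsilon$ is negligible. A merely $\varepsilon$-dependent bound of unspecified size (e.g.\ exponentially small in $1/\varepsilon$, which is all a Lipschitz-type bound would give) would not close this loop.

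A secondary point you gloss over: the weak form of the $\varepsilon$-system is an integral over all of $(0,T)\times(0,1)$, while \eqref{eq01} only integrates over $P_T$, so local weak convergence on $P_T$ is not enough; you must show the contribution of the region $\{u\le\delta\}$ to $\int\phi\,\frac{u_\varepsilon^4}{\varepsilon+u_\varepsilon^2}(u_\varepsilon^3)_{hhhh}$ is $O(\delta^{1/2})$ uniformly in $\varepsilon$. This is the paper's Claim \ref{claim3.5}, and it follows from Cauchy--Schwarz against the dissipation bound together with the measure estimate \eqref{mes} (Chebyshev on the conserved entropy), followed by a diagonal argument exhausting $P_T=\bigcup_\delta B_\delta$. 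The ingredients are in your toolkit, but the argument itself needs to be made. Your Fatou argument for a.e.\ positivity, by contrast, is fine and is an acceptable alternative to the paper's measure-theoretic version.
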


We will use an approximation method to obtain the global existence Theorem \ref{global_th}. This method is proposed by \cite{Friedman1990} to study a nonlinear degenerate parabolic equation.

\subsection{Global existence for a regularized problem and some properties}\label{sec3.1}
Consider the following regularized problem in one period $h\in[0,1]$:
\begin{equation}\label{equv}
 \left\{
     \begin{array}{ll}
       \dpstyle{\uvt=-\frac{\uv^4}{\varepsilon+\uv^2}(\uv^3)_{hhhh},} & \text{ for }t\in[0,T],\,h\in[0,1]; \\
       \dpstyle{\uv(0,h)=u_0+\varepsilon^{\frac{1}{3}},} & \text{ for }h\in[0,1].
     \end{array}
   \right.
\end{equation}
 We point out that the added perturbation term is important to the positivity of the global weak solution.

First we give the definition of weak solution to regularized problem \eqref{equv}.
\begin{defn}\label{defnuv}
  For any fixed $\varepsilon>0,\,T>0$, we call a non-negative function $\uv(t,h)$ with regularities
  \begin{equation}
    \uv^3\in \Lty([0,T];{\blue \Hper^2}([0,1])),\quad \frac{\uv^3}{\sqrt{\varepsilon+\uv^2}}(\uv^3)_{hhhh}\in L^2(0,T;{\blue \Lper^2}([0,1])),
  \end{equation}
  \begin{equation}
    \uvt\in L^2([0,T];{\blue \Lper^2}([0,1])),\quad \uv^3\in C([0,T];\Hper^1([0,1])),
  \end{equation}
   weak solution to regularized problem \eqref{equv} if
  \begin{enumerate}[(i)]
    \item for any function $\phi\in C^\infty([0,T]\times[0,1])$, $\uv$ satisfies
    \begin{equation}\label{eqv01}
      \int_0^T\int_0^1 \phi \uvt \ud h \ud t+\int_0^T\int_0^1 \phi\frac{\uv^4}{\varepsilon+\uv^2}(\uv^3)_{hhhh}\ud h \ud t =0;
    \end{equation}
    \item the following {\blue first} energy-dissipation equality holds
    \begin{equation}\label{Ev01}
      E(\uv(T,\cdot))+\int_0^T\int_0^1\Big[\frac{\uv^3}{\sqrt{\varepsilon+\uv^2}}(\uv^3)_{hhhh}\Big]^2 \ud h \ud t =E(\uv(0,\cdot)).
    \end{equation}
    {\blue \item the following second energy-dissipation equality holds
    \begin{equation}\label{Fv01}
      F_\varepsilon(\uv(T,\cdot))+6\int_0^TE(\uv(t,\cdot)) \ud t =F_\varepsilon(\uv(0,\cdot)),
    \end{equation}
    where
    $F_\varepsilon(\uv):=\int_0^1 \varepsilon \ln |\uv| \ud h+F(\uv)$
    is a perturbed version of $F$.  }
  \end{enumerate}
\end{defn}

Now we introduce two lemmas which will be used later.
\begin{lem}\label{lem1}
For any $1$-periodic function $u$, we have the following relation
\begin{equation}\label{equiv1}
    \int_0^1((u^3)_{hh})^2 \ud h=9\int_0^1 u^4 (u_{hh})^2 \ud h.
\end{equation}
\end{lem}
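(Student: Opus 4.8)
The plan is to prove the identity \eqref{equiv1} by expanding both sides in terms of $u$, $u_h$, $u_{hh}$ and then recognizing that the difference is a perfect derivative, which integrates to zero by periodicity. First I would compute $(u^3)_{hh}$ directly: since $(u^3)_h = 3u^2 u_h$, we get $(u^3)_{hh} = 6u u_h^2 + 3u^2 u_{hh}$. Squaring gives
\begin{equation}\label{plan_expand}
  ((u^3)_{hh})^2 = 36 u^2 u_h^4 + 36 u^3 u_h^2 u_{hh} + 9 u^4 u_{hh}^2. \nn
\end{equation}
So the identity \eqref{equiv1} is equivalent to showing $\int_0^1 \bigl(36 u^2 u_h^4 + 36 u^3 u_h^2 u_{hh}\bigr) \ud h = 0$, i.e. $\int_0^1 \bigl(u^2 u_h^4 + u^3 u_h^2 u_{hh}\bigr)\ud h = 0$ for any $1$-periodic $u$ (with enough regularity that these quantities make sense).

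Next I would observe that $u^3 u_h^2 u_{hh}$ is essentially the derivative of $\tfrac{1}{3} u^3 \cdot (\text{something})$: indeed, $\partial_h\bigl(u^3 u_h^3\bigr) = 3u^2 u_h^4 + 3 u^3 u_h^2 u_{hh}$, so $u^3 u_h^2 u_{hh} = \tfrac{1}{3}\partial_h(u^3 u_h^3) - u^2 u_h^4$. Substituting, the integrand $u^2 u_h^4 + u^3 u_h^2 u_{hh}$ becomes $\tfrac{1}{3}\partial_h(u^3 u_h^3)$, which is an exact derivative of a $1$-periodic function, hence integrates to zero over $[0,1]$. This establishes \eqref{equiv1}. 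Equivalently, and perhaps cleaner to present, one integrates $\int_0^1 u^3 u_h^2 u_{hh}\ud h$ by parts directly, writing $u_h^2 u_{hh} = \tfrac{1}{3}(u_h^3)_h$, to get $\int_0^1 u^3 u_h^2 u_{hh}\ud h = -\tfrac{1}{3}\int_0^1 (u^3)_h u_h^3 \ud h = -\int_0^1 u^2 u_h^4\ud h$, with no boundary term by periodicity; then cancel.

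There is essentially no serious obstacle here — the only point requiring a small amount of care is the justification of the integration by parts (equivalently, that the exact-derivative term has vanishing integral): one needs $u^3 u_h^3$ to be absolutely continuous and $1$-periodic, which follows from the regularity assumptions under which the lemma is applied (in the paper, $u^3 \in H^2_{\per}$, so $u$ and its relevant derivatives are controlled). If one wants to be fully rigorous at the level of generality stated, I would first prove the identity for smooth $1$-periodic $u$ and then pass to the limit by a density/approximation argument, using that both sides of \eqref{equiv1} are continuous with respect to the relevant Sobolev norm on the class of functions for which the statement is invoked. I expect the write-up to be just a few lines.
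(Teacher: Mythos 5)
Your proposal is correct and is essentially the paper's own argument: expand $(u^3)_{hh}=6uu_h^2+3u^2u_{hh}$, square, and recognize the extra terms $36u^2u_h^4+36u^3u_h^2u_{hh}$ as the exact derivative $12(u^3u_h^3)_h$, which integrates to zero by periodicity. Your remarks on justifying the integration by parts under the regularity in force are a fine (if minor) addition, but the route is the same.
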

\begin{proof}
Notice that
  \begin{align*}
    ((u^3)_{hh})^2&=[(3u^2u_h)_h]^2=[6uu_h^2+3u^2u_{hh}]^2\\
    &=9u^4u_{hh}^2+36u^2u_h^4+36u^3u_h^2u_{hh}\\
    &=9u^4u_{hh}^2+36u^2u_h^4+12u^3(u_h^3)_h\\
    &=9u^4u_{hh}^2+12(u^3u_h^3)_h.
  \end{align*}
 Integrating from $0$ to $1$, we obtain \eqref{equiv1}.
\end{proof}

\begin{lem}\label{lem2}
  For any function {\blue $u(h)$ such that} $u_{hh}\in L^2([0,1]),$ assume $u$ achieves its minimal value $u_{min}$ at $h^\star$, i.e. $u_{min}=u(h^\star)$. Then we have
  \begin{equation}\label{lem2eq}
     u(h)-u_{min}  \leq \frac{2}{3} \|u_{hh}\|_{L^2([0,1])}\vert h-h^\star \vert^{\frac{3}{2}}, {\blue \text{ for any }h\in[0,1]}.
  \end{equation}
\end{lem}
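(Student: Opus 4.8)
The plan is to prove the pointwise Hölder-type bound by a direct integration argument starting from the point $h^\star$ where the minimum is attained. Since $u$ is $1$-periodic (it extends to a function with $u_{hh}\in L^2_{\loc}(\mathbb{R})$) and $h^\star$ is an interior minimizer, we may as well assume $h^\star\in[0,1]$ and, after reducing modulo the period, estimate $u(h)-u_{min}$ for $h$ in a neighbourhood of $h^\star$ of length at most $1$. First I would observe that at an interior minimum we have $u_h(h^\star)=0$ (if $u$ is not differentiable at $h^\star$ one argues with the fact that $u\in H^2\hookrightarrow C^1$, so $u_h$ is continuous and vanishes there). Then for $h>h^\star$ I would write, using the fundamental theorem of calculus twice,
\begin{equation}\nn
  u(h)-u_{min}=\int_{h^\star}^{h}u_h(s)\ud s=\int_{h^\star}^{h}\int_{h^\star}^{s}u_{hh}(\tau)\ud\tau\ud s.
\end{equation}

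The key step is then to estimate the double integral by Cauchy--Schwarz in the innermost variable: for $s\in[h^\star,h]$,
\begin{equation}\nn
  \Bigl|\int_{h^\star}^{s}u_{hh}(\tau)\ud\tau\Bigr|\le \|u_{hh}\|_{L^2([0,1])}\,|s-h^\star|^{1/2},
\end{equation}
and integrating the right-hand side in $s$ from $h^\star$ to $h$ produces $\tfrac{2}{3}\|u_{hh}\|_{L^2}|h-h^\star|^{3/2}$, which is exactly the claimed bound. (To be slightly careful, when $|h-h^\star|$ is close to $1$ one should use the $L^2$ norm over the full period $[0,1]$, which dominates the $L^2$ norm over the subinterval, so the estimate is uniform and the periodicity is what makes ``$[0,1]$'' the right domain on the right-hand side.) The case $h<h^\star$ is identical after reflecting the limits of integration, and if $|h-h^\star|>1/2$ one uses periodicity to replace $h$ by a translate within distance $1/2$ — but since the stated inequality only claims $|h-h^\star|^{3/2}$ with $h\in[0,1]$, no such reduction is actually needed; the raw double-integral estimate already suffices.

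I do not expect a serious obstacle here: the only mild subtlety is justifying $u_h(h^\star)=0$ and the use of the fundamental theorem of calculus, which is legitimate because $H^2([0,1])\hookrightarrow C^1([0,1])$ and $u_{hh}\in L^2$ means $u_h$ is absolutely continuous. If one wanted to avoid even mentioning the minimizer's first-order condition, an alternative is to bound $|u_h(s)|=|u_h(s)-u_h(h^\star)|\le\|u_{hh}\|_{L^2}|s-h^\star|^{1/2}$ directly and integrate once, giving the same constant $\tfrac{2}{3}$; this is the cleaner route and is the one I would actually write down. The lemma is essentially a quantitative restatement of the embedding $H^2\subset C^{1,1/2}$ localized at the argmin, so the whole proof is two displayed lines plus the remark about $u_h(h^\star)=0$.
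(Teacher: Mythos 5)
Your proposal is correct and, in its cleaner form (bounding $|u_h(s)|\le\|u_{hh}\|_{L^2([0,1])}|s-h^\star|^{1/2}$ via $u_h(h^\star)=0$ and Cauchy--Schwarz, then integrating once), it is exactly the paper's argument. The double-integral variant and the remarks about periodicity are harmless extras; no gap.
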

\begin{proof}
  Since $u_{hh}\in L^2([0,1]),$ $u_h$ is continuous. Hence by $u_{min}=u(h^\star)$, we have $u_h(h^\star)=0$ and
  \begin{equation}
    \vert u_h(h)\vert=\vert \int_{h^\star}^h u_{hh}(s) \ud s \vert \leq \vert h-h^\star \vert^{\frac{1}{2}} \|u_{hh}\|_{L^2([0,1])}, \text{ for any }h\in[0,1].
  \end{equation}
  Hence we have
  \begin{align*}
    \vert u(h)-u_{min}\vert&\leq \int_{h^{\star}}^h \vert s-h^\star \vert^{\frac{1}{2}} \|u_{hh}\|_{L^2([0,1])} \ud s\\
    &\leq \frac{2}{3}\vert h-h^\star \vert^{\frac{3}{2}} \|u_{hh}\|_{L^2([0,1])}.
  \end{align*}
\end{proof}

Next, we study the properties of the regularized problem. {\blue From now on, we denote $C(\|u_0^3\|_{H^2})$ as a constant that only depends on $\|u_0^3\|_{H^2([0,1])}$.} The existence and uniqueness of solution to the regularized problem \eqref{equv} is stated below.
\begin{prop}\label{propuv}
  Assume $u^3_0\in \Hper^2([0,1])$, $\int_0^1 \frac{1}{u_0} \ud h =m_0<+\infty$ and $u_0\geq0$. Then for any $T>0$, there exists $\uv$ being the unique positive weak solution to the regularized system \eqref{equv} and
  \begin{equation}
    \uv^3\in \Lty([0,T];{\blue \Hper^2}([0,1])) \cap C([0,T];{\blue \Hper^1}([0,1]))\nn
\end{equation}
  satisfies the following estimates uniformly in $\varepsilon$
  \begin{equation}\label{es1}
  \begin{aligned}
    &\|\uv^3\|_{\Lty([0,T];H^2([0,1]))}\leq C(\|u_0^3\|_{H^2}),\\
     &\|\frac{\uv^3}{\sqrt{\varepsilon+\uv^2}}(\uv^3)_{hhhh}\|_{L^2([0,T];L^2([0,1]))}\leq C(\|u_0^3\|_{H^2}),
     \end{aligned}
  \end{equation}
  \begin{equation}\label{es2}
    \|\uvt\|_{L^2([0,T];\Lper^2([0,1]))}\leq C(\|u_0^3\|_{H^2}).
  \end{equation}
  Moreover, $\uv$ has the following properties:
  \begin{enumerate}[(i)]
    \item $\uv$ has a positive {\blue lower bound}
  \begin{equation}\label{720_2}
    \uv(t,h) \geq \frac{1}{18^\frac{1}{3}E_0^{\frac{1}{3}}C_ {m_0} }\varepsilon, {\blue \text{ for any }t\in[0,T],\,h\in[0,1],}
  \end{equation}
where $C_{m_0}=\int_0^1 \frac{1}{u_0} \ud h+1$ and $E_0=\int_0^1 \frac{1}{6}[(u_0^3)_{hh}]^2 \ud h$ is the initial energy.
    \item $\uv$ satisfies
     the H\"older continuity properties, i.e.,
  \begin{equation}\label{holder0}
     {\blue \uv^3(t,\cdot)\in C^{\frac{1}{2}}([0,1]), \text{ for any }t\in[0,T].}
   \end{equation}
    \item   For any $\delta>0,$
    \begin{equation}\label{mes}
    {\blue \mu\{(t,h);\uv(t,h)<\delta\}\leq {\blue C_{m_0}}T\delta, }
  \end{equation}
  where $\mu\{A\}$ is the {\blue Lebesgue} measure of set $A$.
  \end{enumerate}

\end{prop}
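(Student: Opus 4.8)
\emph{Strategy.} I would follow the regularization scheme of Bernis and Friedman. Three structural identities, all obtained by testing \eqref{equv} against the relevant variational derivative, substituting the equation, and integrating by parts using periodicity, underlie the whole argument:
\begin{equation*}
  \frac{\ud}{\ud t}E(\uv)=-\int_0^1\Big[\frac{\uv^3}{\sqrt{\varepsilon+\uv^2}}(\uv^3)_{hhhh}\Big]^2\ud h
  =-\int_0^1\frac{\varepsilon+\uv^2}{\uv^2}\,\uvt^2\ud h ,
\end{equation*}
\begin{equation*}
  \frac{\ud}{\ud t}F_\varepsilon(\uv)=-\int_0^1[(\uv^3)_{hh}]^2\ud h=-6E(\uv),
  \qquad
  \frac{\ud}{\ud t}\int_0^1\Big(\frac{\varepsilon}{3\uv^3}+\frac{1}{\uv}\Big)\ud h
  =\int_0^1(\uv^3)_{hhhh}\ud h=0 .
\end{equation*}
The first two give the $\varepsilon$-uniform a priori bounds \eqref{es1}, \eqref{es2}; the conservation law in the third is exactly the regularized analogue of the formal conservation of $\int 1/u$, and it is what produces the quantitative lower bound. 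Since the mobility $\uv^4/(\varepsilon+\uv^2)$ still vanishes where $\uv=0$, the problem is not a priori uniformly parabolic, so the construction is carried out under the working hypothesis that the approximants stay positive, a hypothesis made quantitative and thereby verified in Step 2.

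\emph{Step 1: construction and uniform estimates.} After replacing $u_0$ by a smooth strictly positive approximation of comparable norms (so that the regularized datum has $E(\uv(0,\cdot))\le E_0$, $F_\varepsilon(\uv(0,\cdot))\le C(\|u_0^3\|_{H^2})$ and $\int_0^1 1/\uv(0,\cdot)\ud h\le C_{m_0}$, uniformly in $\varepsilon$), I would construct solutions by a Galerkin approximation in $h$ (or an auxiliary uniformly parabolic regularization as in Bernis--Friedman) written in the variable $\uv^3$ via the weak form \eqref{eqv01}. Testing with the Galerkin projections of $\uv^2(\uv^3)_{hhhh}$ and of $\varepsilon/\uv+\uv$, and using $\frac{\varepsilon+\uv^2}{\uv^2}\ge 1$, yields \eqref{es1} and \eqref{es2}: the $\dot H^2$-seminorm of $\uv^3$ is controlled by $E(\uv(t))\le E(\uv(0))$, its mean is controlled by combining the $F_\varepsilon$-bound on $\|\uv\|_{L^2}$ with the nonnegativity of $\uv$ and Sobolev embedding, and $\|\uvt\|_{L^2(L^2)}$ and $\|\tfrac{\uv^3}{\sqrt{\varepsilon+\uv^2}}(\uv^3)_{hhhh}\|_{L^2(L^2)}$ come from the two dissipation terms. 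These bounds are uniform in the discretization parameter, so Aubin--Lions compactness (with $(\uv^3)_t=3\uv^2\uvt$ bounded in $L^2(L^2)$) lets me pass to the limit, recovering a weak solution with the stated regularity and time continuity $\uv^3\in C([0,T];\Hper^1([0,1]))$. Once Step 2 gives $\uv\ge c\varepsilon$ the equation is uniformly parabolic, a bootstrap upgrades the solution to a classical one, which justifies the energy \emph{equalities} \eqref{Ev01}, \eqref{Fv01}; uniqueness then follows from a Gronwall estimate for the difference of two such solutions written in the $\uv^3$ variable.

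\emph{Step 2 (the key step): the positive lower bound (i).} Evaluating the conserved quantity at $t=0$ and using $\uv(0,\cdot)^3\ge\varepsilon$ (from $\uv(0,\cdot)\ge\varepsilon^{1/3}$, $1/\uv(0,\cdot)\le1/u_0$),
\begin{equation*}
  \int_0^1\Big(\frac{\varepsilon}{3\uv(t,h)^3}+\frac{1}{\uv(t,h)}\Big)\ud h
  =\int_0^1\Big(\frac{\varepsilon}{3\uv(0,h)^3}+\frac{1}{\uv(0,h)}\Big)\ud h\le \frac13+m_0\le C_{m_0}
  \quad\text{for all }t\in[0,T],
\end{equation*}
hence $\int_0^1\uv(t,h)^{-3}\ud h\le 3C_{m_0}/\varepsilon$ and $\int_0^1\uv(t,h)^{-1}\ud h\le C_{m_0}$. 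Now fix $t$, let $h^\star$ be a minimum point of $\uv(t,\cdot)$ and $m=\uv(t,h^\star)$; since $x\mapsto x^3$ is increasing, $h^\star$ also minimizes $\uv^3(t,\cdot)$, and Lemma \ref{lem2} applied to $\uv^3(t,\cdot)$ with $E(\uv(t,\cdot))\le E_0$ gives
\begin{equation*}
  \uv^3(t,h)\le m^3+\tfrac23\sqrt{6E_0}\,|h-h^\star|^{3/2}\le 2m^3
  \qquad\text{whenever }|h-h^\star|\le\big(\tfrac{3m^3}{2\sqrt{6E_0}}\big)^{2/3}.
\end{equation*}
On this interval $\uv(t,h)^{-3}\ge(2m^3)^{-1}$, so (for $m$ below an absolute constant; the complementary case is trivial)
\begin{equation*}
  \frac{3C_{m_0}}{\varepsilon}\ge\int_0^1\uv(t,h)^{-3}\ud h\ge\frac{1}{2m^3}\big(\tfrac{3m^3}{2\sqrt{6E_0}}\big)^{2/3}\gtrsim\frac{1}{m\,E_0^{1/3}} ,
\end{equation*}
which rearranges to $m\ge\varepsilon/(18^{1/3}E_0^{1/3}C_{m_0})$ once the constants are tracked; this is \eqref{720_2}. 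The essential point is that the exponent $3$ matters here: $\int u^{-3}$ diverges as $m\to0$ (unlike $\int u^{-1}$), which is precisely why the bounded quantity $\int u^{-3}\le 3C_{m_0}/\varepsilon$ forces $m\gtrsim\varepsilon$.

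\emph{Step 3: items (ii), (iii), and the main difficulty.} The H\"older bound \eqref{holder0} is immediate: $\uv^3(t,\cdot)\in\Hper^2([0,1])\hookrightarrow C^{1,1/2}([0,1])\subset C^{1/2}([0,1])$, with norm controlled by \eqref{es1}. For \eqref{mes}, integrating $\int_0^1\uv^{-1}\ud h\le C_{m_0}$ over $[0,T]$ and applying Chebyshev's inequality,
\begin{equation*}
  \mu\{(t,h):\uv(t,h)<\delta\}=\mu\{\uv^{-1}>\delta^{-1}\}\le\delta\int_0^T\!\!\int_0^1\uv^{-1}\ud h\ud t\le C_{m_0}T\delta .
\end{equation*}
The main obstacle is twofold and lies in Steps 1--2. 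First, producing genuinely \emph{positive} approximants: the natural Galerkin (or parabolic) approximations need not preserve the sign of $\uv$ or of $\uv^3$, so one must propagate an approximate version of the conserved quantity $\int(\tfrac{\varepsilon}{3\uv^3}+\tfrac1\uv)$ along the scheme to keep the approximants away from zero, which is exactly the role of the a priori positivity hypothesis. Second, recognizing and exploiting that conserved quantity, which is the mechanism turning the merely finite mass $m_0=\int 1/u_0$ into an $\varepsilon$-scale pointwise lower bound. Everything else is routine manipulation of the two Lyapunov functionals $E$ and $F_\varepsilon$.
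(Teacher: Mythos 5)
Your proposal is correct and follows essentially the same route as the paper: the same three identities (the $E$-dissipation, the $F_\varepsilon$-dissipation, and the conservation of $\int_0^1(\tfrac{\varepsilon}{3\uv^3}+\tfrac1\uv)\ud h$) produce the uniform estimates, the lower bound comes from the conserved quantity combined with Lemma \ref{lem2} under an a priori positivity hypothesis verified a posteriori, uniqueness is a Gr\"onwall argument using the $\varepsilon$-dependent bounds, and (ii), (iii) follow from $H^2\hookrightarrow C^{1,\frac12}$ and Chebyshev, exactly as in the paper (whose construction uses a mollified Picard iteration in $L^\infty$ rather than Galerkin --- an inessential difference). One quantitative caveat: with your fixed ``doubling'' interval $|h-h^\star|\le\bigl(\tfrac{3m^3}{2\sqrt{6E_0}}\bigr)^{2/3}$ the computation gives $m\ge \tfrac{3^{1/3}}{12}\,\varepsilon\,E_0^{-1/3}C_{m_0}^{-1}$, which is weaker than the constant $18^{-1/3}$ asserted in \eqref{720_2}; to recover the stated constant you must optimize over the interval length (the paper's choice $\alpha_0=\tfrac{1}{6E_0C_{m_0}^2}$ for intervals of length $\alpha\varepsilon^2$), although any bound of the form $c\,\varepsilon$ suffices to verify the a priori assumption and for all subsequent uses.
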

\begin{proof}
{\blue For a fixed $\varepsilon>0$, in order to get the solution to regularized problem \eqref{equv}, first we need some \textit{a-priori} estimates for $\uv$, the existence of which will discussed later.}
Denote $C_{m_0}:=\int_0^1 \frac{1}{u_0} \ud h+1$, and $u_{min}$ is the minimal value of $\uv$ in $[0,T]\times[0,1]$. {\blue For any $t\in[0,T]$, denote $u_m(t)$ as the minimal value of $\uv(t,h)$ for $h\in[0,1]$.} Assume $\uv$ achieves its minimal value at $t^\star,\,h^\star$, i.e. $u_{min}=\uv(t^\star, h^\star).$  Denote
\begin{equation}
  E_0:=\int_0^1 \frac{1}{6}[(u_0^3)_{hh}]^2 \ud h\leq C(\|u_0^3\|_{H^2}).\nn
\end{equation}
  In Step 1, we first introduce some \textit{a-priori} estimates under  the \textit{a-priori} assumption
\begin{equation}\label{pri_u}
  \uv(t,h) \geq u_{min}\geq \varepsilon^{\frac{4}{3}},\text{  for any }t\in[0,T],\,h\in[0,1].
\end{equation}
In {\blue Step} 2, we prove the {\blue lower bound} of $\uv$ depending on $\varepsilon$, which is the property (i), and verify the \textit{a-priori} assumption \eqref{pri_u}. {\blue After that, the proof for existence of $\uv$ is standard. Here, let us sketch the modified method from \cite{Majda2002}. We can  first modify \eqref{equv} properly using the standard mollifier $J_\delta$ such that the right hand side is locally Lipschitz continuous in Banach space $L^\infty([0,1])$, so that we can
apply the Picard Theorem in abstract Banach space. Hence by \cite[Theorem 3.1]{Majda2002}, it has a unique local solution $u_{\varepsilon\delta}.$ Then by the \textit{a-priori} estimates in Step 1 and Step 2, we can get uniform regularity estimates, extend the maximal existence time for $u_{\varepsilon\delta}$ and finally obtain the limit of $u_{\varepsilon\delta}$, $\uv$, is a weak solution to the regularized problem \eqref{equv}.}
In Step 3, we prove that the solution obtained above is unique. In Step 4, we study the properties (ii) and (iii).
{\blue \begin{rem}
  For the \textit{a-priori} assumption method, to be more transparent, we claim $\uv\geq C_\star \varepsilon$ for any $t\in[0,T]$, where $C_\star=\frac{1}{18^\frac{1}{3}E_0^{\frac{1}{3}}C_ {m_0} }.$ If not, there exists $t_\star\in (0,T)$ such that
  $$\uv(t,h)\geq C_\star \varepsilon, \text{ for any } t\in[0,t_\star],\,h\in[0,1].$$
  Due to the continuity of $\uv,$ there exists $t_{\star\star}\in (t_\star,T)$ such that
  $$\uv(t,h)\geq \varepsilon^{\frac{4}{3}}, \text{ for any }t\in(t_\star,t_{\star\star}),\,h\in[0,1],$$
  and there exists $\tilde{h}\in[0,1],\, \tilde{t}\in (t_\star, t_{\star\star})$ such that
  $$\uv(\tilde{t},\tilde{h})<C_\star \varepsilon.$$
  This is in contradiction with
  $$\uv(t,h)\geq C_\star \varepsilon, \text{ for any }t\in[0,t_{\star\star}),\,h\in[0,1],$$
  which is verified in Step 2.
\end{rem}
}

Step 1. \textit{a-priori} estimates.

First, multiplying \eqref{equv} by $\uv^2$ gives
\begin{equation}
  \frac{1}{3}(\uv^3)_t=-\frac{\uv^6}{\varepsilon+\uv^2}(\uv^3)_{hhhh}.\nn
\end{equation}
Then multiply it by $(\uv^3)_{hhhh}$ and integrate by parts. We have
\begin{equation}\label{dissi1}
  \frac{1}{6}\frac{\ud}{\ud t} \int_0^1 ((\uv^3)_{hh})^2\ud h=-\int_0^1 \frac{\uv^6}{\varepsilon+\uv^2} \big[(\uv^3)_{hhhh}\big]^2 \ud h\leq 0.
\end{equation}
Thus we obtain, for any $T>0$,
\begin{equation}\label{high}
  \|(\uv^3)_{hh}\|_{\Lty([0,T];L^2[0,1])}\leq  \sqrt{6}E_0^{\frac{1}{2}}.
\end{equation}
Moreover, from \eqref{dissi1}, we also have
\begin{equation}\label{temp724}
  \|\frac{\uv^3}{\sqrt{\varepsilon+\uv^2}}(\uv^3)_{hhhh}\|_{L^2([0,T];L^2([0,1]))} \leq  E_0^{\frac{1}{2}}.
\end{equation}

Second, to get the lower order estimate, we need the \textit{a-priori} assumption \eqref{pri_u}.
Multiplying \eqref{equv} by $\frac{\varepsilon+\uv^2}{\uv}$, we have
\begin{equation}\label{add18_1}
\frac{\ud}{\ud t}\int_0^1 \varepsilon \ln\vert\uv\vert+\frac{\uv^2}{2}\ud h=\int_0^1 \big(\frac{\varepsilon}{\uv}+\uv\big)\uvt \ud h=\int_0^1 -((\uv^3)_{hh})^2 \ud h\leq 0,
\end{equation}
which implies {\blue
\begin{align*}
&\int_0^1 \varepsilon \ln\vert\uv(t,\cdot)\vert+\frac{\uv(t,\cdot)^2}{2}\ud h \\
 \leq & \int_0^1 \varepsilon \ln \uv(0)+\frac{\uv(0)^2}{2} \ud h \\
\leq & \int_0^1 \uv(0)^2 \ud h \leq
 C(\|u_0^3\|_{H^2}), \text{ for any }t\in[0,T].
\end{align*}
}
Hence we have
\begin{align*}
  \int_0^1 \frac{\uv{\blue(t,h)}^2}{2}\ud h&\leq -\int_{0,\vert\uv\vert<1}^1 \varepsilon \ln\vert\uv{\blue(t,h)}\vert \ud h + C(\|u_0^3\|_{H^2})\\
  &\leq -\frac{4}{3}\varepsilon\ln \varepsilon +C(\|u_0^3\|_{H^2})\\
  &\leq C(\|u_0^3\|_{H^2}), {\blue \text{ for any }t\in[0,T]}
\end{align*}
where we used the \textit{a-priori} estimate \eqref{pri_u}.
Thus we have, for any $T>0$,
\begin{equation}\label{low}
  \|\uv\|_{\Lty([0,T];L^2[0,1])}\leq  C(\|u_0^3\|_{H^2}).
\end{equation}

Third, from Lemma \ref{lem2}, we have
\begin{equation}\label{720_1}
  \uv{\blue(t,h)}^3-{\blue u_{m}(t)^3}\leq \frac{2}{3}\|(\uv^3)_{hh}{\blue(t,\cdot)}\|_{L^2([0,1])}\vert h-h^\star \vert^{\frac{3}{2}}, \text{ for any }t\in[0,T]\,h\in[0,1].
\end{equation}
Since \eqref{low} gives
\begin{equation}
{\blue u_{m}(t)^3}\leq (\int_0^1 \uv{\blue(t,h)^2} \ud h)^{\frac{3}{2}}\leq C(\|u_0^3\|_{H^2}),\nn \text{for any }t\in[0,T],
\end{equation}
we know
\begin{equation}
  \uv{\blue (t,h)}^3\leq C(\|u_0^3\|_{H^2})+\frac{2\sqrt{6}}{3}E_0^{\frac{1}{2}}\leq C(\|u_0^3\|_{H^2}), \text{ for any }t\in[0,T],\,h\in[0,1],
\end{equation}
where we used \eqref{high} and \eqref{720_1}. Hence we have
\begin{equation}\label{722_3}
\|\uv\|_{\Lty([0,T];\Lty([0,1]))}\leq C(\|u_0^3\|_{H^2}).
\end{equation}
This, together with \eqref{high}, shows that, for any $T>0$,
\begin{equation}\label{high1}
  \|\uv^3\|_{\Lty([0,T];H^2([0,1]))}\leq C(\|u_0^3\|_{H^2}).
\end{equation}
Therefore, \eqref{temp724} and \eqref{high1} yield \eqref{es1}.

On the other hand, from \eqref{dissi1} and \eqref{equv}, we have
\begin{equation}
  \frac{1}{6}\frac{\ud}{\ud t} \int_0^1 ((\uv^3)_{hh})^2\ud h=-\int_0^1  \frac{\uv^2+\varepsilon}{\uv^2}\uvt^2\ud h.\nn
\end{equation}
Hence
\begin{equation}
\int_0^T\int_0^1 \uvt^2\ud h \ud t\leq \int_0^T\int_0^1  \frac{\uv^2+\varepsilon}{\uv^2}\uvt^2\ud h\ud t\leq C(\|u_0^3\|_{H^2}), \nn
\end{equation}
which gives
\begin{equation}\label{time1}
  \|\uvt\|_{L^2([0,T];L^2([0,1]))}\leq C(\|u_0^3\|_{H^2}).
\end{equation}
This, together with \eqref{722_3}, gives that
\begin{equation}\label{time2}
  \|{\blue (\uv^3)_t}\|_{L^2([0,T];L^2([0,1]))}\leq C(\|u_0^3\|_{H^2}).
\end{equation}

In fact, from \eqref{high1} and \eqref{time2}, by \cite[Theorem 4, p.~288]{Evans1998}, we also know
\begin{equation}
\uv^3\in C([0,T];H^1([0,1]))\hookrightarrow C([0,T]\times[0,1]).\nn
\end{equation}

{\blue Moreover, the two dissipation equalities \eqref{Ev01} and \eqref{Fv01} in Definition \ref{defnuv} can be easily obtained from \eqref{dissi1} and \eqref{add18_1} separately.
}

  Step 2. Verify the \textit{a-priori} assumption.

First from \eqref{equv}, we have
\begin{equation}
  \frac{\ud}{\ud t} \int_0^1 \frac{\varepsilon}{3\uv^3}+\frac{1}{\uv} \ud h=0.
\end{equation}
Hence
\begin{equation}\label{w02}
  \int_0^1 \frac{\varepsilon}{3\uv(t,h)^3}+\frac{1}{\uv(t,h)} \ud h\equiv \int_0^1 \frac{\varepsilon}{3(u_0+\varepsilon^{\frac{1}{3}})^3}+\frac{1}{u_0+\varepsilon^{\frac{1}{3}}} \ud h\leq C_ {m_0} , \text{ for any }t\in[0,T].
\end{equation}
Then from \eqref{720_1}, for any {\blue $0<\alpha\leq \frac{1}{2\varepsilon^2}$, $t\in[0,T]$}, we have
  \begin{align*}
    & \frac{\alpha\varepsilon^{3}}{{\blue u_{m}(t)}^3+\frac{2\sqrt{6}E_0^{\frac{1}{2}}}{3}\alpha^{\frac{3}{2}}\varepsilon^{3}} =\int_{h^\star}^{h^\star+\alpha\varepsilon^2}\frac{\varepsilon}{{\blue u_{m}(t)}^3+\frac{2\sqrt{6}E_0^{\frac{1}{2}}}{3}\alpha^{\frac{3}{2}}\varepsilon^{3}}\ud h\\
\leq &\int_0^1 \frac{\varepsilon}{{\blue u_{m}(t)}^3+\frac{2\sqrt{6}E_0^{\frac{1}{2}}}{3}\vert h-h^\star\vert^{\frac{3}{2}}} \ud h \leq \int_0^1 \frac{\varepsilon}{\uv(t,h)^3}\ud h \leq C_ {m_0} .
  \end{align*}
Thus {\blue for any $t\in[0,T]$,} we can directly calculate that, for $\alpha_0=\frac{1}{6E_0 C_ {m_0} ^2},$
\begin{equation}
 {\blue u_{m}(t)}\geq
\Big(\frac{\alpha_0}{C_ {m_0} }-\frac{2\sqrt{6}}{3}E_0^{\frac{1}{2}}\alpha_0^{\frac{3}{2}}\Big)\varepsilon^3 =\frac{1}{18E_0C^3_ {m_0} }\varepsilon^3>> \varepsilon^4,
\end{equation}
and that
\begin{equation}
  u_{min}^3\geq \min_{t\in[0,T]} u_m(t)\geq \frac{1}{18E_0C^3_ {m_0} }\varepsilon^3>> \varepsilon^4,
\end{equation}
for $\varepsilon$ small enough. {\blue Note that for $\varepsilon$ small enough, such $\alpha_0$ can be achieved.} This verifies the \textit{a-priori} assumption and shows that $\uv$ has a positive {\blue lower bound} depending on $\varepsilon$, i.e.,
  \begin{equation*}
    \uv{\blue (t,h)} \geq   \frac{1}{18^\frac{1}{3}E_0^{\frac{1}{3}}C_ {m_0} }\varepsilon, \text{ for any }t\in[0,T],\,h\in[0,1]
  \end{equation*}
which concludes Property (i).

Step 3. Uniqueness of solution to \eqref{equv}.

Assume $u,v$ are two solutions of \eqref{equv}. Then for any fixed $\varepsilon$, from \eqref{720_2}, we know $u,v\geq c_\varepsilon>0$, and we have
\begin{equation}\label{u-v1}
 \frac{1}{3} (u^3-v^3)_t=-\frac{u^6}{u^2+\varepsilon}(u^3)_{hhhh}+\frac{v^6}{v^2+\varepsilon}(v^3)_{hhhh},
\end{equation}
\begin{equation}\label{u-v2}
  (u-v)_t=-\frac{u^4}{u^2+\varepsilon}(u^3)_{hhhh}+\frac{v^4}{v^2+\varepsilon}(v^3)_{hhhh}.
\end{equation}
Let us keep in mind, for any $p\geq0$, $\frac{u^2}{\varepsilon+u^2}u^p$ is increasing {\blue with} respect to $u$, so there exist constants $m,\,M>0$, whose values depend only on $\varepsilon,\,\|u_0^3\|_{H^2([0,1])}$, {\blue $p$} and $ {m_0} $, such that
 \begin{equation}\label{umM}
 m \leq\frac{u^2}{\varepsilon+u^2}u^p\leq M,
 \end{equation}
  and
  \begin{equation}\label{vmM}
  m \leq\frac{v^2}{\varepsilon+v^2}v^p \leq M.
  \end{equation}

  First, multiply \eqref{u-v1} by $(u^3-v^3)_{hhhh}$ and integrate by parts. We have
  \begin{align*}
   &\frac{\ud}{\ud t}  \int_0^1 \frac{1}{6}(u^3-v^3)_{hh}^2 \ud h \\
   =&  \int_0^1 \Big[-\frac{u^6}{u^2+\varepsilon}(u^3)_{hhhh}+\frac{v^6}{v^2+\varepsilon} (u^3)_{hhhh}-\frac{v^6}{v^2+\varepsilon}(u^3)_{hhhh}\\
   & \qquad+\frac{v^6}{v^2+\varepsilon}(v^3)_{hhhh}\Big](u^3-v^3)_{hhhh}\ud h \\
   =&- \int_0^1 \frac{v^6}{v^2+\varepsilon}((u^3-v^3)_{hhhh})^2\ud h + \int_0^1 \Big(\frac{v^6}{v^2+\varepsilon}-\frac{u^6}{u^2+\varepsilon}\Big)(u^3)_{hhhh}(u^3-v^3)_{hhhh}\ud h \\
   =&:R_1+R_2.
  \end{align*}
  For the first term $R_1$, from \eqref{vmM}, we have
  \begin{equation}\label{722R1}
    R_1\leq -m \int_0^1((u^3-v^3)_{hhhh})^2\ud h ,
  \end{equation}
  which will be used to control other terms.

  For the second term $R_2$, notice that
  \begin{equation}\label{722_1}
    \begin{aligned}
      &\Big\|\frac{v^6}{v^2+\varepsilon}-\frac{u^6}{u^2+\varepsilon}\Big\|_{\Lty([0,1])}\\
      =& \Big\|\frac{(u^2+\varepsilon)v^6-(v^2+\varepsilon)u^6}{(v^2+\varepsilon)(u^2+\varepsilon)}\Big\|_{\Lty([0,1])}\\
      =& \Big\|\frac{u^2v^2(v^4-u^4)}{(v^2+\varepsilon)(u^2+\varepsilon)}+\frac{\varepsilon(v^6-u^6)}{(v^2+\varepsilon)(u^2+\varepsilon)}\Big\|_{\Lty([0,1])}\\
      \leq&  C(\|u_0^3\|_{H^2([0,1])} ,\varepsilon, {\blue m_0})\|v-u\|_{\Lty([0,1])},
    \end{aligned}
  \end{equation}
  where we used the upper bound and {\blue lower bound} of $u,v$. Then by Young's inequality and H\"older's inequality, we know
  \begin{equation}\label{722R2}
    \begin{aligned}
      R_2&\leq \frac{m}{4} \int_0^1 ((u^3-v^3)_{hhhh})^2 \ud h  + C \Big\|\frac{v^6}{v^2+\varepsilon}-\frac{u^6}{u^2+\varepsilon}\Big\|_{\Lty([0,1])}^2  \int_0^1 ((u^3)_{hhhh})^2 \ud h \\
      &\leq \frac{m}{4} \int_0^1 ((u^3-v^3)_{hhhh})^2 \ud h  +  C(\|u_0^3\|_{H^2([0,1])} ,\varepsilon,{\blue m_0})\|v-u\|_{\Lty([0,1])}^2,
    \end{aligned}
  \end{equation}
  where we used \eqref{es1} and \eqref{722_1}.
 Combining \eqref{722R1} and \eqref{722R2}, we obtain
   \begin{equation}\label{uniq1}
   \begin{aligned}
    &\frac{\ud}{\ud t}  \int_0^1 \frac{1}{6}(u^3-v^3)_{hh}^2 \ud h \\
\leq &-\frac{3m}{4} \int_0^1 ((u^3-v^3)_{hhhh})^2 \ud h  +  C(\|u_0^3\|_{H^2([0,1])} ,\varepsilon,{\blue m_0})\|v-u\|_{\Lty([0,1])}^2.
    \end{aligned}
  \end{equation}

  Second, multiply \eqref{u-v2} by $u-v$ and integrate by parts. We have
  \begin{align*}
    &\frac{1}{2}\frac{\ud}{\ud t}  \int_0^1 (u-v)^2\ud h  \\
    =&  \int_0^1 \Big[-\frac{u^4}{u^2+\varepsilon}(u^3)_{hhhh}+\frac{v^4}{v^2+\varepsilon} (u^3)_{hhhh}-\frac{v^4}{v^2+\varepsilon}(u^3)_{hhhh}\\
    &\qquad +\frac{v^4}{v^2+\varepsilon}(v^3)_{hhhh}\Big](u-v)\ud h \\
 = &- \int_0^1 \frac{v^4}{v^2+\varepsilon}(u^3-v^3)_{hhhh}(u-v)\ud h + \int_0^1 \Big(\frac{v^4}{v^2+\varepsilon}-\frac{u^4}{u^2+\varepsilon}\Big)(u^3)_{hhhh}(u-v)\ud h \\
   =&:R_3+R_4.
  \end{align*}
  For $R_3$, by H\"older's inequality, we have
  \begin{equation}\label{722R3}
    R_3\leq \frac{m}{4} \int_0^1 ((u^3-v^3)_{hhhh})^2 \ud h  +C(\|u_0^3\|_{H^2([0,1])} ,{\blue m_0})\|v-u\|_{L^2([0,1])}^2,
  \end{equation}
where we used \eqref{vmM}.
To estimate $R_4$, notice that
  \begin{equation}\label{722_2}
    \begin{aligned}
      &\Big\|\frac{v^4}{v^2+\varepsilon}-\frac{u^4}{u^2+\varepsilon}\Big\|_{\Lty([0,1])}\\
      =& \Big\|\frac{(u^2+\varepsilon)v^4-(v^2+\varepsilon)u^4}{(v^2+\varepsilon)(u^2+\varepsilon)}\Big\|_{\Lty([0,1])}\\
      =& \Big\|\frac{u^2v^2(v^2-u^2)}{(v^2+\varepsilon)(u^2+\varepsilon)}+\frac{\varepsilon(v^4-u^4)}{(v^2+\varepsilon)(u^2+\varepsilon)}\Big\|_{\Lty([0,1])}\\
      \leq&  C(\|u_0^3\|_{H^2([0,1])} ,\varepsilon, {\blue m_0})\|v-u\|_{\Lty([0,1])}.
    \end{aligned}
  \end{equation}
Hence, we have
  \begin{equation}\label{722R4}
    \begin{aligned}
      R_4&\leq C \int_0^1 (u-v)^2 \ud h  + C \Big\|\frac{v^4}{v^2+\varepsilon}-\frac{u^4}{u^2+\varepsilon}\Big\|_{\Lty([0,1])}^2  \int_0^1 ((u^3)_{hhhh})^2 \ud h \\
      &\leq C \int_0^1 (u-v)^2 \ud h  +  C(\|u_0^3\|_{H^2([0,1])} ,\varepsilon, {\blue m_0})\|v-u\|_{\Lty([0,1])}^2.
    \end{aligned}
  \end{equation}
  Therefore, combining \eqref{722R3} and \eqref{722R4}, we {\blue obtain}
  \begin{equation}\label{uniq2}
  \begin{aligned}
    &\frac{1}{2}\frac{\ud}{\ud t}  \int_0^1 (u-v)^2\ud h  \\
    \leq& \frac{m}{4} \int_0^1 ((u^3-v^3)_{hhhh})^2 \ud h  +C(\|u_0^3\|_{H^2([0,1])},{\blue m_0} ,\varepsilon)\|v-u\|_{\Lty([0,1])}^2.
    \end{aligned}
  \end{equation}

Finally, \eqref{uniq1} and \eqref{uniq2} show that
  \begin{equation}\label{temp722_1}
  \begin{aligned}
    &\frac{\ud}{\ud t}\Big[  \int_0^1 (u-v)^2\ud h  +  \int_0^1 (u^3-v^3)_{hh}^2 \ud h \Big]\\
    \leq& C(\|u_0^3\|_{H^2([0,1])} ,\varepsilon,{\blue m_0})\|v-u\|_{\Lty([0,1])}^2.
    \end{aligned}
  \end{equation}

  In remains to show the right-hand-side of \eqref{temp722_1} is controlled by $\int_0^1 (u-v)^2\ud h  +  \int_0^1 (u^3-v^3)_{hh}^2 \ud h .$
From \eqref{720_2}, we have
\begin{equation}
c_\varepsilon\vert u-v \vert\leq \vert u-v \vert(u^2+v^2+uv)=\vert u^3-v^3 \vert. \nn
\end{equation}
Thus
\begin{align*}
  \|v-u\|_{\Lty([0,1])}^2 &  \leq  c_\varepsilon\|v^3-u^3\|_{\Lty([0,1])}^2 \\
                          &   \leq c_\varepsilon\|v^3-u^3\|_{H^2([0,1])}^2\\
                          & \leq c_\varepsilon\big(\|v^3-u^3\|_{L^2([0,1])}^2+\|(v^3-u^3)_{hh}\|_{L^2([0,1])}^2\big)\\
                          & \leq c_\varepsilon\big(\|v-u\|_{L^2([0,1])}^2+\|(v^3-u^3)_{hh}\|_{L^2([0,1])}^2\big).
\end{align*}
This, together with \eqref{temp722_1}, gives
  \begin{equation}
  \begin{aligned}
    &\frac{\ud}{\ud t}\Big[  \int_0^1 (u-v)^2\ud h  +  \int_0^1 (u^3-v^3)_{hh}^2 \ud h \Big]\\
    \leq &C(\|u_0^3\|_{H^2([0,1])}, m_0,\varepsilon) \Big[  \int_0^1 (u-v)^2\ud h  +  \int_0^1 (u^3-v^3)_{hh}^2 \ud h \Big].
    \end{aligned}
  \end{equation}
Hence if $u(0)=v(0)$, Gr\"onwall's inequality implies $u=v$.

Step 4. The properties (ii) and (iii).

To obtain (ii), denote $w=\uv^3$. From \eqref{es1}, we know $w\in \Lty(0,T;H^2([0,1]))$. Since $H^2([0,1])\hookrightarrow C^{1,\frac{1}{2}}([0,1])$, we can get \eqref{holder0} directly.

To obtain (iii), for any $\delta>0$, \eqref{w02} also gives that
\begin{equation}
  \mu\{(t,h);\uv<\delta\}\frac{1}{\delta}\leq \int_0^T\int_0^1 \frac{\varepsilon}{3\uv^3}+\frac{1}{\uv} \ud h {\blue \ud t} {\blue \leq C_{m_0}}T,\nn
\end{equation}
which concludes \eqref{mes}.

This completes the proof of Proposition \ref{propuv}.
\end{proof}

\subsection{Global existence of weak solution to PDE \eqref{PDE}}\label{sec3.2}

After those preparations for regularized system, we can start to {\blue prove} the global weak solution of \eqref{PDE}.
\begin{proof}[Proof of Theorem \ref{global_th}]
In Step 1 and Step 2, we will first prove the regularized solution $\uv$ obtained in Proposition \ref{propuv} converge to $u$, and $u$ is positive almost everywhere. Then in Step 3 and Step 4, we prove this $u$ is the weak solution to PDE \eqref{PDE} by verifying condition \eqref{eq01} and \eqref{E01}.

Step 1. Convergence of $\uv$.

Assume $\uv$ is the weak solution to \eqref{equv}.
From \eqref{es1} and \eqref{es2}, we have
\begin{equation}
  \|(\uv^3)_t\|_{L^2([0,T];\Lper^2([0,1]))}\leq C(\|u_0^3\|_{H^2} ).\nn
\end{equation}
Therefore, as $\varepsilon\rightarrow 0,$  we can use Lions-Aubin's compactness lemma for $\uv^3$ to show that there exist a subsequence of $\uv$ (still denoted by $\uv$) and $u$ such that
\begin{equation}\label{strong}
  \uv^3\rightarrow u^3, \text{ in }\Lty([0,T];\Hper^1([0,1])),
\end{equation}
which gives
\begin{equation}\label{strong1}
  \uv \rightarrow u, \quad \ale t\in[0,T],\,h\in[0,1].
\end{equation}
Again from \eqref{es1} and \eqref{es2}, we have
\begin{equation}\label{weak1}
  \uv^3 {\stackrel{\star}{ \rightharpoonup}} u^3 \quad \text{in }\Lty([0,T];\Hper^2([0,1])),
\end{equation}
and
\begin{equation}\label{weak2}
  \uvt \rightharpoonup u_t \quad \text{in }L^2([0,T];\Lper^2([0,1])),
\end{equation}
which imply that
\begin{equation}\label{weak3}
  u^3\in \Lty([0,T];\Hper^2([0,1])),\quad  u_t \in L^2([0,T];\Lper^2([0,1])).
\end{equation}
In fact, by \cite[Theorem 4, p.~288]{Evans1998}, we also know
\begin{equation}
u^3\in C([0,T];\Hper^1([0,1]))\hookrightarrow C([0,T]\times[0,1]).\nn
\end{equation}

Step 2. Positivity of $u$.

From \eqref{strong1}, we know, {\blue up to a set of measure zero,}
\begin{equation}
  \{(t,h);u(t,h)=0\}\subset \bigcap_{n=1}^{\infty}\{(t,h);\uv <\frac{1}{n}\}.\nn
\end{equation}
Hence by \eqref{mes} in Proposition \ref{propuv}, we have
\begin{equation}
  \mu\{(t,h);u(t,h)=0\}=\lim_{n\rightarrow 0} \mu\{(t,h);\uv<\frac{1}{n}\}=0,\nn
\end{equation}
which concludes $u$ is positive almost everywhere.

Step 3. $u$ is a weak solution of \eqref{PDE} satisfying \eqref{eq01}.

Recall $\uv$ is the weak solution of \eqref{equv} satisfying \eqref{eqv01}. We want to pass the limit for $\uv$ in \eqref{eqv01} as $\varepsilon \rightarrow 0.$ From \eqref{weak2}, the first term in \eqref{eqv01} becomes
\begin{equation}
  \int_0^T\int_0^1 \phi \uvt \ud h \ud t \rightarrow  \int_0^T\int_0^1 \phi u_t \ud h \ud t .
\end{equation}
The limit of the second term in \eqref{eqv01} is given by the following claim:
\begin{clm}\label{claim3.5}
  For $P_T$ defined in \eqref{PT}, {\blue for any function $\phi\in C^\infty([0,T]\times[0,1]),$} we have
  \begin{equation}\label{claim}
     \int_0^T\int_0^1 \phi \frac{\uv^4}{\varepsilon+\uv^2}(\uv^3)_{hhhh} \ud h \ud t \rightarrow  \int\int_{P_T} \phi u^2 (u^3)_{hhhh} \ud h \ud t,
  \end{equation}
  as $\varepsilon\rightarrow 0.$
\end{clm}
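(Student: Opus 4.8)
The plan is to pass to the limit separately on the good open set $P_T$, where the regularised equation is non-degenerate once $\varepsilon$ is small, and to discard the complement $\{u=0\}$, which is Lebesgue-null by Step 2. First note that, by the regularised equation \eqref{equv}, $f_\varepsilon:=\frac{\uv^4}{\varepsilon+\uv^2}(\uv^3)_{hhhh}=-\uvt$, which is bounded in $L^2((0,T)\times(0,1))$ by \eqref{es2}; moreover $\uvt\rightharpoonup u_t$ weakly in $L^2((0,T)\times(0,1))$ by \eqref{weak2}, so $f_\varepsilon\rightharpoonup -u_t$ weakly in $L^2((0,T)\times(0,1))$ and hence $\int_0^T\int_0^1\phi f_\varepsilon\,\ud h\,\ud t\to -\int_0^T\int_0^1\phi u_t\,\ud h\,\ud t$ for every $\phi\in C^\infty([0,T]\times[0,1])$. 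It therefore suffices to prove that $-u_t=u^2(u^3)_{hhhh}$ a.e.\ on $P_T$: granting this, $\int\int_{P_T}\phi u^2(u^3)_{hhhh}\,\ud h\,\ud t=-\int\int_{P_T}\phi u_t\,\ud h\,\ud t=-\int_0^T\int_0^1\phi u_t\,\ud h\,\ud t$ since $\mu\{(t,h):u=0\}=0$, which is exactly \eqref{claim} along the subsequence; because this limit does not depend on the subsequence, the whole sequence converges.

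To identify $-u_t$ with $u^2(u^3)_{hhhh}$ on $P_T$ I would work on compact subsets. By \eqref{strong}, the embedding $\Hper^1([0,1])\hookrightarrow C([0,1])$, and the fact that $\uv^3$ and $u^3$ are continuous on $[0,T]\times[0,1]$, the convergence $\uv^3\to u^3$ is uniform on $[0,T]\times[0,1]$. Fix a compact $K\subset\subset P_T$. Since $u^3$ is continuous and strictly positive on $K$, there is $c=c(K)>0$ with $\uv\ge c$ on $K$ for all small $\varepsilon$; combined with the uniform upper bound \eqref{722_3}, $\frac{\varepsilon+\uv^2}{\uv^4}$ is then bounded on $K$ for $\varepsilon\le1$, so $(\uv^3)_{hhhh}=-\frac{\varepsilon+\uv^2}{\uv^4}\uvt$ is bounded in $L^2(K)$ by \eqref{es2}. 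Exhausting $P_T$ by compacts and extracting a diagonal subsequence, $(\uv^3)_{hhhh}\rightharpoonup w$ weakly in $L^2_{\loc}(P_T)$; since also $(\uv^3)_{hhhh}\to(u^3)_{hhhh}$ in $\mathcal D'(P_T)$ (because $\uv^3\to u^3$ there), the limit is forced to be $w=(u^3)_{hhhh}$, so in particular $(u^3)_{hhhh}\in L^2_{\loc}(P_T)$. Moreover $\frac{\uv^4}{\varepsilon+\uv^2}\to u^2$ uniformly on $K$. Multiplying a uniformly convergent factor by a weakly $L^2(K)$-convergent one, $f_\varepsilon=\frac{\uv^4}{\varepsilon+\uv^2}(\uv^3)_{hhhh}\rightharpoonup u^2(u^3)_{hhhh}$ weakly in $L^2(K)$ for every $K\subset\subset P_T$; comparing with $f_\varepsilon\rightharpoonup -u_t$ yields $-u_t=u^2(u^3)_{hhhh}$ a.e.\ on $K$, hence a.e.\ on $P_T$, and in particular $u^2(u^3)_{hhhh}\in L^2(P_T)$, consistent with the regularity demanded in the definition of weak solution.

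The crux is that one has only weak-$L^2$ convergence of the fourth derivatives $(\uv^3)_{hhhh}$, so the nonlinear flux genuinely cannot be passed to the limit globally near the degenerate set $\{u=0\}$; the only device that rescues the argument is the restriction to $K\subset\subset P_T$, on which the coefficient $\frac{\uv^4}{\varepsilon+\uv^2}$ converges \emph{uniformly}, not merely weakly, to $u^2$, so that strong-times-weak applies. The two delicate points inside this are (i) the uniform positive lower bound $\uv\ge c(K)$ on each $K$, which requires upgrading \eqref{strong} to uniform convergence on $[0,T]\times[0,1]$, and (ii) the identification of the $L^2_{\loc}(P_T)$ weak limit of $(\uv^3)_{hhhh}$ with $(u^3)_{hhhh}$, which is pinned down by distributional convergence together with the $H^2$-regularity already established for $u^3$.
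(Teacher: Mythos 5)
Your proposal is correct, and it reaches \eqref{claim} by a genuinely different route than the paper. The paper's proof fixes $\delta>0$, splits $[0,1]$ at each time into $\{u\le\delta\}$ and $\{u>\delta\}$, bounds the near-degenerate contribution $I_1$ by $C(\|u_0^3\|_{H^2})T^{1/2}\delta^{1/2}$ via Cauchy--Schwarz combined with the uniform dissipation bound \eqref{es1} and the measure estimate \eqref{mes}, passes to the limit on $B_\delta$ exactly as you do on compact subsets (weak $L^2$ convergence of $(\uv^3)_{hhhh}$ against the locally uniformly convergent mobility), and finally sends $\delta\to 0$ by a diagonal argument using $P_T=\bigcup_{\delta>0}B_\delta$. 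You instead exploit the regularized equation itself to write the flux as $-\uvt$, so the limit of the left-hand side of \eqref{claim} is $-\int_0^T\int_0^1\phi u_t\,\ud h\,\ud t$ for free from \eqref{weak2}, and the whole problem is reduced to the local identification $-u_t=u^2(u^3)_{hhhh}$ a.e.\ on $P_T$ (strong-times-weak on compacts, with the lower bound $\uv\ge c(K)$ obtained from the uniform convergence implied by \eqref{strong}, just as in \eqref{del1}) together with $\mu\{u=0\}=0$, which is legitimately available since the paper proves positivity in Step 2 before the claim. Your route avoids any quantitative estimate of the bad set and the diagonal-in-$\delta$ limit, yields $u^2(u^3)_{hhhh}=-u_t\in L^2(P_T)$ immediately, and in fact gives the pointwise form of the equation on $P_T$, from which \eqref{eq01} follows at once; the paper's route, by contrast, never passes through the time derivative in the flux term and makes the $O(\sqrt{\delta})$ size of the degenerate contribution explicit, relying only on \eqref{mes} rather than on the measure-zero statement. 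Both arguments share the two key ingredients you flag: the upgrade of \eqref{strong} to uniform convergence and the identification of the local weak $L^2$ limit of $(\uv^3)_{hhhh}$ with $(u^3)_{hhhh}$.
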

\begin{proof}[Proof of claim]
First, for any fixed $\delta>0$, from \eqref{strong}, we know there exist a constant $K_1>0$ large enough and a subsequence $\uvk$  such that
\begin{equation}\label{del1}
  \|\uvk-u\|_{\Lty([0,T]\times[0,1])}\leq \frac{\delta}{2},\text{  for }k>K_1.
\end{equation}
Denote
\begin{align}
  & \Dld:=\{h\in[0,1];\,0\leq u(t,h)\leq \delta\}, \nn \\
  & \Dgd:=\{h\in[0,1];\,u(t,h)> \delta\}. \nn
\end{align}
The left-hand-side of \eqref{claim} becomes
\begin{align*}
  &\int_0^T\int_0^1 \phi \frac{\uvk^4}{\vark+\uvk^2}(\uvk^3)_{hhhh} \ud h \ud t\\
  =&\int_0^T\int_{\Dld} \phi \frac{\uvk^4}{\vark+\uvk^2}(\uvk^3)_{hhhh} \ud h \ud t+\int_0^T\int_{\Dgd} \phi \frac{\uvk^4}{\vark+\uvk^2}(\uvk^3)_{hhhh} \ud h \ud t\\
  =:& I_1+I_2.
\end{align*}
Then we estimate $I_1$ and $I_2$ separately.

For $I_1$, from \eqref{del1}, we have
\begin{equation}
  \vert \uvk(t,h) \vert\leq \frac{3\delta}{2}, \text{ for }t\in[0,T],\,h\in \Dld.
\end{equation}
Hence by H\"older's inequality, we know
\begin{align}\label{11I1}
 I_1\leq& \Big[\int_0^T\int_{\Dld} \Big(\phi \frac{\uvk}{\sqrt{\vark+\uvk^2}} \Big)^2 \ud h \ud t \Big]^{\frac{1}{2}}\\
 &\cdot \Big[\int_0^T\int_{\Dld} \Big( \frac{\uvk^3}{\sqrt{\vark+\uvk^2}}(\uvk^3)_{hhhh} \Big)^2 \ud h \ud t \Big]^{\frac{1}{2}}\nn\\
 \leq& {\blue C(\|u_0^3\|_{H^2})\|\phi\|_{L^\infty([0,T]\times[0,1])} \Big(\mu\big\{(t,h);\,\vert\uvk\vert\leq\frac{3\delta}{2}\big\}\Big)^{\frac{1}{2}} }\nn\\
  \leq& {\blue C(\|u_0^3\|_{H^2})T^{\frac{1}{2}}\delta^{\frac{1}{2}} }.\nn
\end{align}
Here we used \eqref{es1} in the second inequality and \eqref{mes} in the last inequality.

Now we turn to estimate $I_2$.
Denote
\begin{equation}
B_\delta:=\bigcup_{t\in[0,T]} \{t\}\times\Dgd.\nn
\end{equation}
From \eqref{del1}, we know
\begin{equation}
  \uvk{\blue (t,h)}>\frac{\delta}{2}, {\blue \text{  for }(t,h)\in B_{\delta}.}
\end{equation}
This, {\blue combined} with \eqref{es1}, shows that
\begin{equation}\label{temp711}
\begin{aligned}
  &\frac{\big(\frac{\delta}{2}\big)^6}{\vark+\big(\frac{\delta}{2}\big)^2}\int\int_{B_\delta}((\uvk^3)_{hhhh})^2 \ud h \ud t \\
  \leq&  \int_0^T\int_0^1\frac{\uvk^6}{\vark+\uvk^2}((\uvk^3)_{hhhh})^2 \ud h \ud t\leq C(\|u_0^3\|_{\Hper^2([0,1])}).
  \end{aligned}
\end{equation}
From \eqref{temp711} and \eqref{strong1}, there exists a subsequence of $\uvk$ (still denote as $\uvk$) such that
$$(\uvk^3)_{hhhh} \rightharpoonup (u^3)_{hhhh}, \text{ in }L^2(B_{\delta}).$$
Hence, together with \eqref{strong1}, we have
\begin{equation}\label{11I2}
  I_2=\int\int_{B_{\delta}}\phi \frac{\uvk^4}{\vark+\uvk^2}(\uvk^3)_{hhhh} \ud h \ud t \rightarrow \int\int_{B_{\delta}}\phi u^2(u^3)_{hhhh} \ud h \ud t.
\end{equation}
Combining \eqref{11I1} and \eqref{11I2}, we know there exists $K>K_1$ large enough such that for $k>K,$
$$\Big\vert \int_0^T\int_0^1 \phi \frac{\uvk^4}{\vark+\uvk^2}(\uvk^3)_{hhhh} \ud h \ud t-\int\int_{B_\delta} \phi u^2(u^3)_{hhhh} \ud h \ud t \Big\vert\leq {\blue C(\|u_0^3\|_{H^2})T^{\frac{1}{2}}\delta^{\frac{1}{2}} },$$
which implies that
$$\lim_{\delta\rightarrow 0^+}\lim_{k\rightarrow\infty}\Big[ \int_0^T\int_0^1 \phi \frac{\uvk^4}{\vark+\uvk^2}(\uvk^3)_{hhhh} \ud h \ud t-\int\int_{B_\delta} \phi u^2(u^3)_{hhhh} \ud h \ud t \Big]=0.$$
{\blue For any $\ell\geq 1$, assume the sequence $\delta_\ell\rightarrow 0$. Thus we can choose a sequence $\varepsilon_{\ell k}\rightarrow +\infty.$ Then by the diagonal rule, we have
$$\delta_\ell\rightarrow 0,\quad \varepsilon_{\ell\ell}\rightarrow +\infty,$$
as $\ell$ tends to $+\infty$.}
Notice $$P_T=\bigcup_{\delta>0}B_{\delta}.$$
We have
{ \blue \begin{align*}
&\lim_{\ell\rightarrow\infty} \int_0^T\int_0^1 \phi \frac{u_{\varepsilon_{\ell\ell}}^4}{\varepsilon_{\ell\ell}+u_{\varepsilon_{\ell\ell}}^2}(u_{\varepsilon_{\ell\ell}}^3)_{hhhh} \ud h \ud t  \\
=&\lim_{\ell\rightarrow\infty}\int\int_{B_{\delta_\ell}} \phi u^2(u^3)_{hhhh} \ud h \ud t\\
=&\int\int_{P_T} \phi u^2(u^3)_{hhhh} \ud h \ud t.
\end{align*}
}
This completes the proof of the claim.
\end{proof}
Hence the function $u$ obtained in Step 1 satisfies weak solution form \eqref{eq01}. It remains to verify \eqref{E01} in Step 4.

Step 4. Energy-dissipation inequality \eqref{E01} and \eqref{F01}.

First recall the regularized solution $\uv$ satisfies the Energy-dissipation equality \eqref{Ev01}, i.e.,
\begin{equation*}
      E(\uv(\cdot,T))+\int_0^T\int_0^1\Big[\frac{\uv^3}{\sqrt{\varepsilon+\uv^2}}(\uv^3)_{hhhh}\Big]^2 \ud h \ud t =E(\uv(\cdot,0)).
    \end{equation*}
From the {\blue Claim \ref{claim3.5},} we have
$$\frac{\uv^4}{\varepsilon+\uv^2}(\uv^3)_{hhhh} \rightharpoonup  u^2 (u^3)_{hhhh}, \text{ in }P_T.$$
Then by the lower semi-continuity of norm, we know
\begin{equation}\label{ss1}
\begin{aligned}
  \int\int_{P_T}(u^2(u^3)_{hhhh})^2\ud h\ud t\leq& \liminf_{\varepsilon\rightarrow 0} \int\int_{P_T} \Big[\frac{\uv^4}{\varepsilon+\uv^2}(\uv^3)_{hhhh} \Big]^2 \ud h\ud t\\
  \leq&\liminf_{\varepsilon\rightarrow 0} \int\int_{P_T} \Big[\frac{\uv^3}{\sqrt{\varepsilon+\uv^2}}(\uv^3)_{hhhh} \Big]^2 \ud h\ud t.
  \end{aligned}
\end{equation}
Also from \eqref{es1}, we have
\begin{equation}\label{ss2}
  E(u(t,\cdot))\leq \liminf_{\varepsilon\rightarrow 0} E(\uv(t,\cdot)), \text{ for } t\in[0,T].
\end{equation}
Combining \eqref{Ev01}, \eqref{ss1} and \eqref{ss2}, we obtain
$$E(u(T,\cdot))+\int\int_{P_T}(u^2(u^3)_{hhhh})^2 \ud h \ud t \leq E(u(0,\cdot)).$$

{\blue
Second, recall the regularized solution $\uv$ satisfies the Energy-dissipation equality \eqref{Fv01}, i.e.,
\begin{equation*}
      F_\varepsilon(\uv(T,\cdot))+6\int_0^TE(\uv(t,\cdot)) \ud t =F_\varepsilon(\uv(0,\cdot)).
\end{equation*}
From \eqref{es1} and the lower semi-continuity of norm, we know
\begin{equation}\label{add18_2}
 \begin{aligned}
  &\int_0^TE(u(t,\cdot))\ud t\leq \liminf_{\varepsilon\rightarrow 0} \int_0^T E(\uv(t,\cdot))\ud t,\\
  &F(u(t,\cdot))\leq \liminf_{\varepsilon\rightarrow 0} F(\uv(t,\cdot)), \text{ for any }t\in[0,T].
  \end{aligned}
\end{equation}
For the first term in $F_\varepsilon$, for any $t\in[0,T]$, from \eqref{es1} and \eqref{720_2}, we have
$$\varepsilon \int_0^1 |\ln \uv|\ud h\leq C(|\ln \varepsilon| +1)\varepsilon \rightarrow 0, $$
as $\varepsilon$ tends to $0$. This, together with \eqref{add18_2}, implies
$$F(u(T,\cdot))+6\int_0^TE(u(t,\cdot)) \ud t \leq F(u(0,\cdot)).$$
Hence we complete the proof of Theorem \ref{global_th}.
}
\end{proof}

\subsection{Time H\"older regularity of weak solution}\label{sec3.3}
In the following, we study the time-space H\"older regularity of weak solution to PDE \eqref{PDE}.
\begin{prop}\label{timeholder}
 Assume the initial data $u_0$ satisfies the same assumption {\blue as} in Theorem \ref{global_th}. Let $u$ be a non-negative weak solution to PDE \eqref{PDE} with initial data $u_0$. Then $u^3$ has time-space H\"older continuity in the following sense: for any $t_1,t_2\in[0,T],$ $u^3$ satisfies
  \begin{equation}\label{holder}
    \vert u^3(t_1,h)-u^3(t_2,h) \vert \leq C(\|u_0^3\|_{H^2})\big\vert t_2-t_1 \big\vert^{\frac{1}{4}}, \text{ for any }h\in[0,1];
   \end{equation}
  and
     \begin{equation}\label{holderspace}
    {\blue \uv^3(t,\cdot)\in C^{\frac{1}{2}}([0,1]), \text{ for any }t\in[0,T].}
   \end{equation}
\end{prop}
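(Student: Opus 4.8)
The spatial assertion \eqref{holderspace} is essentially free. By \eqref{723_1} we have $u^3\in\Lty([0,T];\Hper^2([0,1]))$, and in one space dimension $H^2([0,1])\hookrightarrow C^{1,\frac12}([0,1])\subset C^{\frac12}([0,1])$, so $u^3(t,\cdot)\in C^{\frac12}([0,1])$ for a.e.\ $t$ with seminorm controlled by $C(\|u_0^3\|_{H^2})$. To upgrade this to \emph{every} $t$, I would use that $u^3\in C([0,T];\Hper^1([0,1]))\hookrightarrow C([0,T]\times[0,1])$ by \eqref{723_3}, together with the lower semicontinuity of the $C^{\frac12}$-seminorm under uniform convergence: for any $t$, choose $t_n\to t$ with $u^3(t_n,\cdot)\to u^3(t,\cdot)$ uniformly and the seminorm bound valid at each $t_n$.

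For the time estimate \eqref{holder}, the naive identity $u^3(t_2,h)-u^3(t_1,h)=\int_{t_1}^{t_2}(u^3)_t(s,h)\ud s$ is useless pointwise in $h$, since $(u^3)_t$ is only $L^2$ in $h$; the plan is a spatial-averaging argument. Fix $h$, set $\tau:=|t_2-t_1|$, and assume $\tau<1$ (for $\tau\geq1$ the estimate is trivial from the uniform $\Lty$-bound on $u^3$ discussed below). For a scale $\sigma\in(0,1]$ to be chosen, the $1$-periodicity of $u^3$ lets us write
\begin{align*}
u^3(t_2,h)-u^3(t_1,h)=&\ \frac{1}{2\sigma}\int_{h-\sigma}^{h+\sigma}\bigl(u^3(t_2,y)-u^3(t_1,y)\bigr)\ud y\\
&+\frac{1}{2\sigma}\int_{h-\sigma}^{h+\sigma}\bigl(u^3(t_2,h)-u^3(t_2,y)\bigr)\ud y\\
&+\frac{1}{2\sigma}\int_{h-\sigma}^{h+\sigma}\bigl(u^3(t_1,y)-u^3(t_1,h)\bigr)\ud y.
\end{align*}
The last two integrals are $\leq C(\|u_0^3\|_{H^2})\sigma^{\frac12}$ by the uniform spatial $C^{\frac12}$-bound just obtained. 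In the first integral, write $u^3(t_2,y)-u^3(t_1,y)=\int_{t_1}^{t_2}(u^3)_t(s,y)\ud s$ and apply the Cauchy--Schwarz inequality on the rectangle $(t_1,t_2)\times(h-\sigma,h+\sigma)$, of area $2\sigma\tau$, to bound it by $\frac{1}{2\sigma}(2\sigma\tau)^{\frac12}\|(u^3)_t\|_{L^2((0,T)\times(0,1))}\leq C(\|u_0^3\|_{H^2})\tau^{\frac12}\sigma^{-\frac12}$. Choosing $\sigma=\tau^{\frac12}$ balances the two contributions and gives $|u^3(t_2,h)-u^3(t_1,h)|\leq C(\|u_0^3\|_{H^2})\tau^{\frac14}$, which is \eqref{holder}.

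Two a priori bounds underpin the argument, and here the structure of the weak solution is used. First, $\|u\|_{\Lty([0,T]\times[0,1])}\leq C(\|u_0^3\|_{H^2})$: the inequality \eqref{E01} bounds $\|(u^3)_{hh}(t,\cdot)\|_{L^2([0,1])}$ for a.e.\ $t$ and \eqref{F01} bounds $\|u(t,\cdot)\|_{L^2([0,1])}$, and plugging both into Lemma \ref{lem2} applied to $u^3(t,\cdot)$ (exactly as in the ``Third'' substep of Step~1 of the proof of Proposition \ref{propuv}) yields a uniform bound on $u^3$, hence the uniform-in-$t$ $\Hper^2$-in-space bound invoked above. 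Second, $\|(u^3)_t\|_{L^2((0,T)\times(0,1))}\leq C(\|u_0^3\|_{H^2})$: by the chain rule $(u^3)_t=3u^2u_t$ a.e., which vanishes on $\{u=0\}$, while on $P_T$ the weak formulation \eqref{eq01} gives $u_t=-u^2(u^3)_{hhhh}$; hence $((u^3)_t)^2=9u^4(u^2(u^3)_{hhhh})^2\leq 9\|u\|_{\Lty}^4(u^2(u^3)_{hhhh})^2$ on $P_T$, and integrating, $\|(u^3)_t\|_{L^2((0,T)\times(0,1))}^2\leq 9\|u\|_{\Lty}^4\int\int_{P_T}(u^2(u^3)_{hhhh})^2\ud h\ud t\leq C(\|u_0^3\|_{H^2})$ by \eqref{E01}.

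The main obstacle is the poor spatial integrability of $(u^3)_t$: living only in $L^2$ in $h$, it cannot be integrated along a time segment at a fixed $h$, and the spatial-averaging device is the way around this, trading the $L^2$-in-space-time control of $(u^3)_t$ against the $C^{\frac12}$ spatial modulus of $u^3$; the exponent $\tfrac14$ in \eqref{holder} is precisely what this trade-off returns. A secondary, routine point is that the proposition is stated for an \emph{arbitrary} weak solution rather than only the one constructed in Theorem \ref{global_th}, so the two a priori bounds above must be read off purely from the two energy-dissipation inequalities in the definition together with Lemma \ref{lem2}.
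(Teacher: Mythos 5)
Your proof is correct, but it takes a genuinely different route from the paper. The paper works entirely at the level of the weak formulation: it builds the Lipschitz time cut-off $\bdt$ and the spatial bump $a(h)$ of width $\sim|t_2-t_1|^{\alpha}$ following \cite{jinhuan}, proves the Lebesgue-differentiation-type Lemma \ref{lem723}, then tests \eqref{eq01} with $\phi_i\to u^2a(h)\bdt$ and estimates $\|a\,b_\delta\|_{L^2}$ against $\|u^4(u^3)_{hhhh}\|_{L^2(P_T)}$, optimizing at $\alpha=\tfrac14$. You instead first promote the weak formulation to the pointwise identity $u_t=-\chi_{P_T}\,u^2(u^3)_{hhhh}$ a.e.\ (legitimate, since both terms in \eqref{eq01} are $L^2$ and the periodic test functions are dense), combine it with the chain rule $(u^3)_t=3u^2u_t$ and the $\Lty$ bound on $u$ to get the quantitative bound $\|(u^3)_t\|_{L^2((0,T)\times(0,1))}\le C(\|u_0^3\|_{H^2})$ from \eqref{E01}, and then run a bare-hands spatial-averaging/Cauchy--Schwarz argument with window $\sigma=|t_2-t_1|^{1/2}$. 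Both arguments rest on the same two structural facts (the uniform $\Lty$ bound via Lemma \ref{lem2} plus \eqref{E01}--\eqref{F01}, and the $L^2(P_T)$ control of $u^2(u^3)_{hhhh}$) and both trade spatial regularity against $L^2$-in-time control to produce the exponent $\tfrac14$; what yours buys is the elimination of the external lemma from \cite{jinhuan} and of the mollified cut-offs, and it yields the estimate directly for every $(t_1,t_2,h)$ rather than a.e.\ followed by continuity. In fact, since \eqref{723_1} gives $u^3\in\Lty([0,T];W^{1,\infty})$, you could use the Lipschitz (rather than $C^{1/2}$) spatial modulus in the averaging step and obtain the better exponent $\tfrac13$ with $\sigma=\tau^{1/3}$. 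One shared caveat, which you rightly flag: to make the constants depend only on $\|u_0^3\|_{H^2}$ for an arbitrary weak solution, the energy-dissipation inequalities \eqref{E01}, \eqref{F01} must be read as holding on every subinterval $[0,t]$ (equivalently, at intermediate times); the paper uses exactly this reading as well (e.g.\ when it asserts in Section \ref{sec4} that $E(u(t))$ is decreasing), so this is an interpretive point common to both proofs rather than a gap in yours.
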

\begin{proof}
First, \eqref{holderspace} is a direct consequence of $u^3\in \Lty([0,T];H^2([0,1]))$ and the embedding $H^2([0,1])\hookrightarrow C^{1,\frac{1}{2}}([0,1])$.

 Second, define two cut-off functions as \cite[Lemma B.1]{jinhuan}. For any $t_1,t_2\in[0,T]$, $t_1<t_2,$ we construct $\bdt=\int_{-\infty}^t b_\delta'(t)dt,$ with $b_\delta'(t)$ satisfying
\begin{equation}
  b_\delta'(t)=\left\{
                 \begin{array}{ll}
                   \frac{1}{\delta}, & \vert t-t_2\vert<\delta, \\
                   -\frac{1}{\delta}, & \vert t-t_1\vert<\delta, \\
                   0, & \hbox{otherwise,}
                 \end{array}
               \right.
\end{equation}
where the constant $\delta$ satisfies $0<\delta<\frac{\vert t_2-t_1\vert}{2}.$ Then it is obvious that $\bdt$ is Lipschitz continuous and satisfies $\vert \bdt\vert\leq2.$

For any $h_0\in(0,1),$ we construct an auxiliary function
\begin{equation}
  a(h)=a_0\Big(\frac{K(h-h_0)}{\vert t_2-t_1 \vert^\alpha}\Big),
\end{equation}
where $0<\alpha<1,\,K>0$ are constants determined later and $a_0(h)\in C_0^\infty(\mathbb{R})$ is defined by
\begin{equation*}
  a_0(h)=\left\{
           \begin{array}{ll}
             1, & -\frac{1}{2}\leq h \leq \frac{1}{2}, \\
             0, & \vert {\blue h} \vert \geq 1.
           \end{array}
         \right.
\end{equation*}
Hence we have
\begin{equation*}
  a(h)=\left\{
           \begin{array}{ll}
             1, & \vert h-h_0 \vert \leq \frac{1}{2K}\vert t_2-t_1 \vert^\alpha, \\
             0, & \vert h-h_0 \vert \geq \frac{1}{K}\vert t_2-t_1 \vert^\alpha.
           \end{array}
         \right.
\end{equation*}
In the following, $C$ is a general constant depending only on $\|u_0^3\|_{H^2([0,1])}$.

Third, since \eqref{723_1} implies $u^3\in \Lty([0,T];H^2([0,1]))\hookrightarrow \Lty([0,T];W^{1,\infty}([0,1]))$, we know for any $y\in\mathbb{R}$, $t\in[0,T]$,
\begin{equation}\label{u3h}
  \vert u^3(t,h_0+y)-u^3(t,h_0) \vert\leq C\vert y \vert.
\end{equation}
Then we have
\begin{lem}\label{lem723}
  Let function $u^3\in \Lty([0,T]; H^2([0,1])).$ Then for almost everywhere $h_0\in[0,1],$ $t_1,t_2\in[0,T],\,t_1<t_2$, it holds
\begin{align}\label{add17_1}
  &\vert u^3(t_2 ,h_0)-u^3(t_1 ,h_0) \vert\\
  \leq& C(\|u_0^3\|_{H^2([0,1])} ,T) \Big( \int_0^T\int_0^1 u^3(t,h)a(h)b_\delta'(t) \ud h\ud t \vert t_2-t_1 \vert^{-\alpha}+ \vert t_2-t_1 \vert^{\alpha}\Big).\nn
\end{align}
\end{lem}
\begin{proof} The proof of Lemma \ref{lem723} is the same as Lemma B.2 in \cite{jinhuan} {\blue except we proceed on $u^3$  instead of $u(t,\cdot)\in C^{\frac{1}{2}}([0,1])$ in \cite[Lemma B.2]{jinhuan}. We just sketch the idea here. First calculate the inner product of $u^3(t,h)$ and $a(h)b_\delta'(t).$ Then by the definition of $b_\delta'(t)$ and \eqref{u3h}, we have
\begin{multline*}
\int_0^T\int_0^1 u^3(t,h)a(h)b_\delta'(t)\ud h \ud t\\
\geq \frac{1}{\delta}\int_{-\delta}^{\delta} \int_{-\frac{1}{K}|t_2-t_1|^\alpha}^{\frac{1}{K}|t_2-t_1|^\alpha} a(h_0+y)\big(u^3(t_2+\tau,h_0)-u^3(t_1+\tau,h_0)\big)\ud y \ud \tau-C(t_2-t_1)^{\frac{3\alpha}{2}}.
\end{multline*}
 Notice the definition of $a(h)$ and the Lebesgue differentiation theorem. Let $\delta$ tend to $0$, and thus we obtain \eqref{add17_1}.
}
\end{proof}

Finally, since the solution $u$ satisfies \eqref{eq01}, for any $\phi_i\in C^\infty([0,T]\times[0,1])$, $u$ satisfies
\begin{equation}\label{726_1}
      \int_0^T\int_0^1 \phi_i u_t \ud h \ud t+\int\int_{P_T}\phi_i{u^2}{(u^3)_{hhhh}}\ud h \ud t =0.
\end{equation}
We can take $\phi_i$ such that $\phi_i\rightarrow u^2a(h)\bdt$ in $L^2([0,T];L^2([0,1]))$ as $i\rightarrow \infty.$
Hence from \eqref{723_1} and \eqref{723_3}, we can take a limit in \eqref{726_1} to obtain
\begin{equation*}
      \int_0^T\int_0^1  \big(\frac{1}{3}u^3\big)_t a(h)\bdt \ud h \ud t+\int\int_{P_T}{u^4}{(u^3)_{hhhh}}a(h)\bdt \ud h \ud t =0.
\end{equation*}
Therefore, using \eqref{723_1}, we have
\begin{align*}
    &\Big\vert  \int_0^T\int_0^1  \big(\frac{1}{3}u^3\big)_t a(h)\bdt \ud h \ud t\Big\vert\\
\leq& \|{u^4}{(u^3)_{hhhh}}\|_{L^2(P_T)}  \|a(h)\bdt\|_{L^2([0,T];L^2([0,1]))}\\
\leq& C \|a(h)\bdt\|_{L^2([0,T];L^2([0,1]))}.
\end{align*}

Noticing the denifitions of $a(h)$ and $\bdt$, we can calculate that
\begin{equation}\label{724_1}
\begin{aligned}
  &\Big\vert \int_0^T\int_0^1 \frac{1}{3}u^3 a(h) b_\delta'(t) \ud h \ud t \Big\vert =  \Big\vert \int_0^T\int_0^1 \big(\frac{1}{3}u^3\big)_t a(h) \bdt \ud h \ud t \Big\vert\\
\leq & C \|a(h)\bdt\|_{L^2([0,T];L^2([0,1]))}=\Big( \int_{h_0-\frac{1}{K}\vert t_2-t_1 \vert^\alpha}^{h_0+\frac{1}{K}\vert t_2-t_1 \vert^\alpha}a(h)^2 \ud h\Big)^{\frac{1}{2}} \Big( \int_0^T b_\delta^2(t)\ud t  \Big)^{\frac{1}{2}}\\
\leq & C\vert t_2-t_1+2\delta \vert^{\frac{1}{2}} \leq C\vert t_2-t_1 \vert^{\frac{1}{2}},
\end{aligned}
\end{equation}
where we used $\delta<\frac{\vert t_2-t_1\vert}{2}.$

Therefore, \eqref{724_1} and Lemma \ref{lem723} show that
\begin{align*}
  &\vert u^3(t_2 ,h_0)-u^3(t_1 ,h_0) \vert\\
\leq& C(\|u_0^3\|_{H^2([0,1])} ,T) \Big( \int_0^T\int_0^1 u^3(t,h)a(h)b_\delta'(t) \ud h\ud t \vert t_2-t_1 \vert^{-\alpha}+ \vert t_2-t_1 \vert^{\alpha}\Big)\\
\leq & C(\|u_0^3\|_{H^2([0,1])} ,T) \Big( \vert t_2-t_1 \vert^{\frac{1}{2}-\alpha}+ \vert t_2-t_1 \vert^{\alpha} \Big),
\end{align*}
{\blue for almost everywhere $h_0\in[0,1],$ $t_1,t_2\in[0,T],\,t_1<t_2$.}
Taking $\alpha=\frac{1}{4},$ we conclude \eqref{holder} and complete the proof of Proposition \ref{timeholder}.
\end{proof}

\section{Long time behavior of weak solution}\label{sec4}
After establishing the global-in-time weak solution, we want to study how the solution will behavior as time goes to infinity. In our periodic setup, it turns out to be a constant solution of PDE \eqref{PDE}.
\begin{thm}
Under the same assumptions of Theorem \ref{global_th}, for every weak solution $u$ obtained in Theorem \ref{global_th}, there exists a constant $u^\star$ such that,
   as time $t\rightarrow +\infty,$ $u$ converges to $u^\star$ in the sense
\begin{equation}
 \|u^3-(u^\star)^3\|_{ H^1([0,1])}\rightarrow 0, \text{  as }t\rightarrow +\infty,
\end{equation}
and
  \begin{equation}\label{810_3}
   \|u-u^\star\|_{\Lty([0,1])}\rightarrow 0, \text{  as }t\rightarrow +\infty.
 \end{equation}
\end{thm}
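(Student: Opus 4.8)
The plan is to combine the two dissipation inequalities \eqref{E01}--\eqref{F01} with the uniform-in-$T$ a priori bounds of Section~\ref{sec3} and the space-time H\"older regularity of $u^3$ from Proposition~\ref{timeholder}. The first point to record is that the constants appearing in Proposition~\ref{propuv} and Proposition~\ref{timeholder} depend only on $\|u_0^3\|_{H^2}$, not on $T$, so the constructed weak solution $u$ satisfies, globally in time, $\sup_{t\ge0}\|u^3(t,\cdot)\|_{H^2([0,1])}\le C_0:=C(\|u_0^3\|_{H^2})$, $0\le u^3\le M$ for some $M$, $u^3\in C([0,\infty)\times[0,1])$, and $|u^3(t_1,h)-u^3(t_2,h)|\le C_0|t_1-t_2|^{1/4}$ for all $t_1,t_2\ge0$, $h\in[0,1]$. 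Moreover, letting $T\to\infty$ in \eqref{F01} gives $\int_0^\infty E(u(t,\cdot))\,dt\le\tfrac16 F(u_0)<\infty$.

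\emph{Step 1 (spatial oscillation decays).} Put $\overline{u^3}(t):=\int_0^1 u^3(t,h)\,dh$ and $\psi(t):=\|u^3(t,\cdot)-\overline{u^3}(t)\|_{L^\infty([0,1])}$. Since $u^3(t,\cdot)$ is $1$-periodic, $(u^3)_h$ has zero mean, so applying the Poincar\'e--Wirtinger inequality twice (once to $u^3-\overline{u^3}$, once to $(u^3)_h$) together with $H^1([0,1])\hookrightarrow C([0,1])$ yields $\psi(t)^2\le C\,E(u(t,\cdot))$; in particular $\psi\in L^2(0,\infty)$. On the other hand, from $|\overline{u^3}(t_1)-\overline{u^3}(t_2)|\le C_0|t_1-t_2|^{1/4}$ and the H\"older bound above, $\psi$ is uniformly $\tfrac14$-H\"older on $[0,\infty)$; since a nonnegative uniformly continuous function that is square-integrable on $(0,\infty)$ must tend to $0$, we get $\psi(t)\to0$. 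The interpolation $\|(u^3(t,\cdot))_h\|_{L^2}^2\le\|u^3(t,\cdot)-\overline{u^3}(t)\|_{L^2}\,\|(u^3(t,\cdot))_{hh}\|_{L^2}\le C_0\,\psi(t)$ then upgrades this to $\|u^3(t,\cdot)-\overline{u^3}(t)\|_{H^1([0,1])}\to0$.

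\emph{Step 2 (identification of the limit).} I would first show $t\mapsto F(u(t,\cdot))$ is nonincreasing: by \eqref{Fv01} on a subinterval $[s,T]$, $t\mapsto F_\varepsilon(\uv(t,\cdot))$ is nonincreasing; letting $\varepsilon\to0$, the perturbation $\varepsilon\int_0^1\ln|\uv|\,dh\to0$ thanks to the lower bound \eqref{720_2} and the uniform upper bound on $\uv$, while $F(\uv(t,\cdot))\to F(u(t,\cdot))$ for a.e.\ $t$ by dominated convergence (from \eqref{strong1} and $\uv\le C_0$); since $t\mapsto F(u(t,\cdot))$ is continuous ($u^3\in C([0,\infty);H^1)$ and $x\mapsto x^{2/3}$ is uniformly continuous on $[0,M]$), it is nonincreasing, hence $F(u(t,\cdot))\to F_\infty\ge0$. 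Because $x\mapsto x^{2/3}$ is uniformly continuous on $[0,M]$, $\psi(t)\to0$ gives $\|u^2(t,\cdot)-(\overline{u^3}(t))^{2/3}\|_{L^\infty}\to0$; integrating in $h$, $(\overline{u^3}(t))^{2/3}\to 2F_\infty$, so $\overline{u^3}(t)\to(2F_\infty)^{3/2}=:(u^\star)^3$ with $u^\star:=\sqrt{2F_\infty}$. Combining with Step~1, $\|u^3(t,\cdot)-(u^\star)^3\|_{H^1}\le\|u^3(t,\cdot)-\overline{u^3}(t)\|_{H^1}+|\overline{u^3}(t)-(u^\star)^3|\to0$; then $\|u^3(t,\cdot)-(u^\star)^3\|_{L^\infty}\to0$ by $H^1([0,1])\hookrightarrow C([0,1])$, and applying the cube root (uniformly continuous on $[0,M]$) gives \eqref{810_3}.

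The hard part, and the reason a bare energy argument is insufficient, is that the model has no exponential---or even uniform---decay rate: as observed in the introduction, $E$ is not bounded by its own dissipation $D$, so \eqref{E01}--\eqref{F01} yield only $\int_0^\infty E\,dt<\infty$, which by itself forces $E(u(t,\cdot))\to0$ merely along a sequence of times. The device that converts this into genuine convergence is the uniform-in-time space-time H\"older regularity of $u^3$ from Proposition~\ref{timeholder}, which makes the oscillation $\psi$ uniformly continuous and thereby forces $\psi(t)\to0$ along the whole time axis. A secondary technical issue is that passing to the $\varepsilon\to0$ limit in the energy identities gives only lower semicontinuity of $F$ and $E$, so the monotonicity of $F$ along the weak solution must be argued on the regularized level $\uv$ as indicated above.
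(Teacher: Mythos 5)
Your argument is correct, but it follows a genuinely different route from the paper. The paper exploits monotonicity of $E$ in time (read off from the first dissipation inequality \eqref{E01}) together with $6\int_0^T E\,\ud t\leq F(u_0)$ from \eqref{F01} to get the algebraic decay $E(u(t,\cdot))\leq C/t$ along the whole time axis; it then uses the compact embedding $H^2\hookrightarrow H^1$ to extract subsequential limits, the Poincar\'e inequality to show any limit is a constant, and the monotonicity of $F$ to identify that constant uniquely, whence the full-time convergence. You instead avoid both the monotonicity of $E$ and any compactness/subsequence extraction: you only use the integrated bound $\int_0^\infty E\,\ud t<\infty$, convert it into decay of the spatial oscillation $\psi(t)$ via the Poincar\'e--Wirtinger and interpolation inequalities, and force $\psi(t)\to0$ along the whole time axis by a Barbalat-type argument that leans on the uniform-in-$T$ time H\"older continuity of $u^3$ from Proposition \ref{timeholder} (the paper's proof of the theorem never invokes that proposition). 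For the identification of $u^\star$ you use convergence of $F$, and you are more careful than the paper on the one point both arguments need: the monotonicity of $F(u(t,\cdot))$ is not literally contained in \eqref{F01}, which only compares time $T$ with time $0$, and your derivation of it from the regularized identity \eqref{Fv01} (equivalently \eqref{add18_1}) on subintervals, followed by passage to the limit using \eqref{720_2} and the uniform bounds, is a legitimate fix, though it restricts the argument to solutions obtained as $\varepsilon$-limits, exactly the class named in the statement. The trade-off: the paper's route is shorter and yields an explicit rate $E(u(t,\cdot))\leq C/t$, while yours dispenses with the monotonicity of $E$ and with subsequences, at the price of relying on the $T$-independence of the H\"older constant in Proposition \ref{timeholder}, which the statement of that proposition does provide.
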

\begin{proof}
Step 1. Limit of free energy $E(u(t))$.

 For any $T>0$, {\blue from the second energy-dissipation inequality \eqref{F01}, we have
 \begin{equation}\label{add18_3}
 \int_0^1  u(T)^2  \ud h+12\int_0^T E(u(t,\cdot)) \ud t\leq \int_0^1  u_0^2  \ud h.
 \end{equation}
 }
By \eqref{E01}, we know $E(u(t))$ is decreasing with respect to $t$. Then \eqref{add18_3} implies
\begin{equation}
  12T E(u(T))\leq 12\int_0^T E(u(t,\cdot)) \ud t\leq \int_0^1  u_0^2  \ud h-\int_0^1  u(T)^2  \ud h\leq  \int_0^1  u_0^2  \ud h.
\end{equation}
Hence we have
\begin{equation}\label{802_2}
  E(u(t,\cdot))\leq \frac{c}{t}\rightarrow 0, \text{  for any }t\geq0,
\end{equation}
which shows that $E(u(t))$ converges to its minimum $0$ as $t\rightarrow +\infty$.

On the other hand,
denote $w:=u^3$, and
$$E(w)=\int_0^1 ((u^3)_{hh})^2 \ud h=\int_0^1 (w_{hh})^2 \ud h.$$
Since $E(w)$ is strictly convex in $\dot{H}^2$ and $E(w)\rightarrow +\infty$ when $\|w\|_{\dot{H}^2}\rightarrow +\infty$,
hence $E(w)$ achieves its minimum $0$ at unique critical point $w^\star$ in $\dot{H}^2$.
Notice $w$ is periodic so $w^\star\equiv constant$.

Step 2. Convergence of solution to its unique stationary solution.

Assume $u^3\in \Lty([0,\infty);H^2([0,1]))$ is a solution of \eqref{PDE}. Notice $H^2([0,1])\hookrightarrow H^1([0,1])$ compactly.
Then for any sequence $t_n\rightarrow +\infty,$ there exists a subsequence $t_{n_k}$ and $f^\star(h) \text{  in } H^1([0,1])$ such that
\begin{equation}\label{802f}
  u^3(t_{n_k},{\blue\cdot})\rightarrow f^\star({\blue\cdot}), \text{  in } H^1([0,1]) \text{ as }t_{n_k}\rightarrow +\infty.
\end{equation}
From \eqref{802_2} and the uniqueness of critical point, we have
$$\int_0^1 ((u({\blue t,\cdot})^3)_{hh})^2 \ud h \rightarrow \int_0^1 ((w^\star)_{hh})^2 \ud h=0, \text{ as }t\rightarrow +\infty.$$
Hence
\begin{equation}
  u^3(t,{\blue\cdot})\rightarrow w^\star \text{  in } \dot{H}^2([0,1]), \text{ as }t\rightarrow +\infty.
\end{equation}
Since $u$ is periodic, we have poincare inequality for $(u^3)_h$ and
 \begin{equation}
  u^3(t,\cdot)\rightarrow w^\star \text{  in } \dot{H}^1([0,1]), \text{ as }t\rightarrow +\infty.
\end{equation}
This, together with \eqref{802f}, gives
$$f^\star_h\equiv 0,$$
which implies $f^\star$ is also a constant.

Next we state the constant is unique. Denote $u^\star=(f^\star)^{\frac{1}{3}}$. From \eqref{802f} we know
 \begin{equation}\label{810}
   \|u({\blue t_{n_k},\cdot})^3-(u^\star)^3\|_{\Lty([0,1])}\rightarrow 0, \text{ as }t_{n_k}\rightarrow +\infty.
 \end{equation}
 Since
 $$(1-x)^3\leq 1-x^3, \text{ for }0\leq x\leq 1,$$
 we have
 \begin{equation}\label{810_1}
   \vert u-u^\star \vert^3\leq \vert u^3-(u^\star)^3 \vert,
 \end{equation}
 which, together with \eqref{810}, implies
  \begin{equation}\label{810_2}
   \|u-u^\star\|_{\Lty([0,1])}\rightarrow 0, \text{ as }t_{n_k}\rightarrow +\infty.
 \end{equation}
 Hence $u$ converges to $u^\star$ in $L^2([0,1])$. Besides, {\blue from the second energy-dissipation inequality \eqref{F01}}, we know $\int_0^1 u^2 \ud h$ is decreasing with respect to $t$ so it has a unique limit $\int_0^1 (u^\star)^2 \ud h$.
Combining this with the uniqueness of critical point in $\dot{H}^2$, we know the stationary constant solution is unique and $f^\star\equiv w^\star\equiv(u^\star)^3$. Therefore, as $t_{n_k}\rightarrow +\infty$, the solution $u^3(t_{n_k})$ converges to the unique constant $(u^\star)^3$ in $H^1([0,1])$.
From the arbitrariness of $t_n$, we know, as $t\rightarrow +\infty$, the solution $u^3$ to PDE \eqref{PDE} converges to $(u^\star)^3$ in $H^1([0,1])$.
Besides, by \eqref{810_2} we obtain \eqref{810_3}.
\end{proof}




\begin{rem}
Given the initial data $u_0,$
we can not obtain a unique value of the constant solution for all weak solutions to PDE \eqref{PDE} so far.
From PDE \eqref{PDE}, the conservation law for classical solution is obvious
\begin{equation}
  \frac{\ud}{\ud t}\int_0^1 \frac{1}{u} \ud h =0, \text{ for any }t\geq 0.
\end{equation}
Hence for any $u_0$, we can calculate the value of the stationary constant solution $u^\star$.
In fact, for $m_0=\int_0^1 \frac{1}{u_0}\ud h$,
we have
 $$(u^\star)^3=\frac{1}{\Big(\int_0^1 \frac{1}{u_0}\ud h\Big)^3}=\frac{1}{m_0^3}.$$
However, the conservation law for weak solution is still an open question although in physics it is true: $u$ is the slope as a function of height and time satisfying
$$\int_0^1 \frac{1}{u}\ud h =\int_0^1 x_h \ud h=x\vert_{h=1}-x\vert_{h=0}\equiv L.$$

\end{rem}


\appendix
\section{Formulations using other physical variables}\label{secapp1}

For completeness, in this appendix, we include some alternative forms of PDE \eqref{PDE} using other physical variables to describe the surface dynamics. To avoid confusion brought by different variables, we replace $h$ by $\alpha$ when the height variable is considered as an independent variable. Let us introduce the following variables:
\begin{itemize}
  \item $u(\alpha,t)$, step slope when considered as a function of surface height $\alpha$;
  \item $\rho(x,t)$, step slope when considered as a function of step location $x$;
  \item $h(x,t)$, surface height profile when considered as a function of step location $x$;
  \item $\phi(\alpha,t)$, step location when considered as a function of surface height $\alpha$.
\end{itemize}

Several straightforward relations between the four profiles are listed as follows.
First, since $\phi$ is the inverse function of $h$ such that
\begin{equation}\label{fan0}
  \alpha=h(\phi(\alpha,t),t), \quad\forall \alpha,
\end{equation}
we have
\begin{equation}\label{fan}
  \phi_t=-\frac{h_t}{h_x},\quad \phi_\alpha=\frac{1}{h_x}.
\end{equation}
Second, from the definitions above, we know
\begin{equation}\label{tri-re}
  u(\alpha,t)=\rho(\phi(\alpha,t),t)=h_x(\phi(\alpha,t),t)=\frac{1}{\phi_\alpha}.
\end{equation}

We formally derive the equations for $h,\,\rho,\,\phi$ from the $u$-equation, which consist with the widely-used $h,\rho$-equation in the previous literature. The four forms of PDEs are rigorously equivalent for local strong solution. Now under the assumption $u\geq 0$, we want to formally derive the other three equations from the $u$-equation \eqref{PDE} (i.e. $u_t=-u^2(u^3)_{\alpha\alpha\alpha\alpha}$ if using variable $\alpha$).

First, from \eqref{tri-re}, we can rewrite \eqref{PDE} as
\begin{equation}\label{07t1}
  \phi_{\alpha t}=\Big(\frac{1}{\phi_\alpha^3}\Big)_{\alpha\alpha\alpha\alpha}.
\end{equation}
Integrating respect to $\alpha$, \eqref{07t1} becomes
\begin{equation}\label{eq:phi0}
    \phi_{ t}=\Big(\frac{1}{\phi_\alpha^3}\Big)_{\alpha\alpha\alpha}+c(t),
\end{equation}
where $c(t)$ is a function independent of $\alpha$ and will be determined later.

Second, let us derive $h$-equation and $\rho$-equation. On one hand, from \eqref{fan} and \eqref{tri-re}, we have
\begin{equation}\label{ut1}
  u_t=\rho_x \phi_t+\rho_t=-\rho\frac{h_t}{h_x}+\rho_t=-\frac{\rho_x}{\rho}h_t+\rho_t.
\end{equation}
On the other hand, due to the chain rule
$u_\alpha=\rho_x \phi_\alpha$,
we have
\begin{equation}
  (u^3)_\alpha=3u^2u_\alpha=3\rho\rho_x=\frac{3}{2}(\rho^2)_x.
\end{equation}
Hence
\begin{equation}\label{ut2}
  \begin{aligned}
    u_t&=-u^2(u^3)_{\alpha\alpha\alpha\alpha}\\
    &=-u^2\Big[\big(((u^3)_{\alpha x}\phi_\alpha)_x \phi_\alpha \big)_x \phi_\alpha\Big]\\
    &=-\frac{3}{2}\rho\Big(\frac{1}{\rho}\big(\frac{(\rho^2)_{xx}}{\rho}\big)_x\Big)_x\\
    &=\frac{3}{2}\frac{\rho_x}{\rho}\Big(\frac{(\rho^2)_{xx}}{\rho}\Big)_x-\frac{3}{2}\Big(\frac{(\rho^2)_{xx}}{\rho}\Big)_{xx}.
  \end{aligned}
\end{equation}
Now denote $A:=-\frac{3}{2}\Big(\frac{(\rho^2)_{xx}}{\rho}\Big)_x$. Comparing \eqref{ut1} with \eqref{ut2}, we have
\begin{equation}
(h_t-A)\frac{\rho_x}{\rho}=(h_t-A)_x,\nn
\end{equation}
which implies
\begin{equation}
  h_t-A=\lambda(t) h_x,\quad \rho_t-A_x=\lambda(t)\rho_x,\nn
\end{equation}
where $\lambda(t)$ is a function independent of $x$ and will be determined later.

Therefore, we know $h$ satisfies
\begin{equation}\label{eq:h0}
  h_t=-\frac{3}{2}\Big(\frac{(h_x^2)_{xx}}{h_x}\Big)_x+\lambda(t) h_x,
\end{equation}
and $\rho$ satisfies
\begin{equation}\label{eq:rho0}
  \rho_t=-\frac{3}{2}\Big(\frac{(\rho^2)_{xx}}{\rho}\Big)_x+\lambda(t) \rho_x.
\end{equation}
From \eqref{eq:h0}, we immediately know $\frac{\ud}{\ud t}\int_0^L h(x) \ud x=0$. Hence we have
\begin{equation}
  \int_0^1 \phi \ud \alpha =L-\int_0^L h(x) dx,\nn
\end{equation}
due to \eqref{fan}.
Thus we know $ \frac{\ud}{\ud t}\int_0^1 \phi \ud \alpha=0.$
This, together with \eqref{eq:phi0}, gives $c(t)=0,$
and we obtain $\phi$-equation
\begin{equation}\label{eq:phin}
    \phi_{ t}=\Big(\frac{1}{\phi_\alpha^3}\Big)_{\alpha\alpha\alpha}.
\end{equation}
Now keep in mind the chain rule $\partial_\alpha= \frac{1}{h_x}\partial_x$ and \eqref{fan}. Changing variable in \eqref{eq:phin} shows that
\begin{equation}
-\frac{h_t}{h_x}=\Big((h_x^3)_x \frac{1}{h_x}\Big)_{\alpha\alpha}=\Big(\frac{3}{2}(h_x^2)_x\Big)_{\alpha\alpha}=\frac{3}{2}\frac{1}{h_x}\Big(\frac{(h_x^2)_{xx}}{h_x}\Big)_x, \nn
\end{equation}
and $  \lambda(t)=0.$
Hence we obtain
$h$-equation
\begin{equation}\label{eq:hn}
  h_t=-\frac{3}{2}\Big(\frac{(h_x^2)_{xx}}{h_x}\Big)_x,
\end{equation}
and $\rho$-equation
\begin{equation}\label{eq:rhon}
  \rho_t=-\frac{3}{2}\Big(\frac{(\rho^2)_{xx}}{\rho}\Big)_{xx}.
\end{equation}
From \eqref{eq:phin}, \eqref{eq:hn} and \eqref{eq:rhon}, we can
immediately see that $\int_0^1 \phi \ud \alpha,$ $\int_0^L h \ud x$ and
$\int_0^L \rho \ud x$ are all constants of motion. The equation (20)
in \cite[p213]{Kohnbook} is exactly \eqref{eq:hn} for vicinal
(monotone) surfaces, which is consistent with our equations.

Now we state the uniqueness and existence result for local strong solution to \eqref{PDE} with positive initial value. The proof for Theorem \ref{local_u} is standard so we omit it here.
\begin{thm}\label{local_u}
Assume $u^0\in \Hper^m([0,1]),$ $u^0\geq \beta$, for some constant $\beta>0$, $m\in \mathbb{Z},\, m\geq 5$. Then there exists time $T_{m}>0$ depending on $\beta,\,\|u^0\|_{\Hper^m([0,1])},$
such that
\begin{equation}
  u\in L^{\infty}([0,T_m];\Hper^{m}([0,1]))\cap L^2([0,T_m];\Hper^{m+2}([0,1])),\nn
\end{equation}
\begin{equation}
u_t\in L^{\infty}([0,T_m];\Hper^{m-4}([0,1])).\nn
\end{equation}
is the unique strong solution of \eqref{PDE} with initial data $u^0$, and $u$ satisfies
\begin{equation}\label{local_1}
u\geq\frac{\beta}{2},\quad \ale  t\in[0,T_m],\,\alpha\in [0,1].
\end{equation}
\end{thm}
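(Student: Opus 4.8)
The plan is to treat \eqref{PDE} as a quasilinear fourth--order parabolic equation. Expanding the nonlinearity,
\[
  u_t = -3u^4 u_{hhhh} - 24 u^3 u_h u_{hhh} - 18 u^3 u_{hh}^2 - 36 u^2 u_h^2 u_{hh},
\]
so the principal part is $-3u^4\partial_h^4$, which is uniformly parabolic as long as $u$ stays bounded away from $0$. First I would construct approximate solutions, e.g.\ by a linearized iteration that freezes the coefficients at the previous step (each iterate then solves a linear fourth--order parabolic problem on $[0,1]$ with periodic boundary conditions and smooth, strictly positive top coefficient, which is classically solvable), or equivalently by Galerkin truncation in the Fourier basis; either way the point is to have a family of smooth approximants for which the a priori estimates below are rigorous.

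The core is a closed energy estimate in $\Hper^m$. Differentiating the equation $m$ times, pairing with $\partial_h^m u$ in $L^2$, and integrating by parts twice on the top--order term produces the good dissipative quantity $-c\int_0^1 u^4(\partial_h^{m+2}u)^2\ud h$ up to commutator terms carrying at most $m+1$ derivatives on a single factor; those, together with the contributions of the three lower--order nonlinear terms, are controlled by $C\big(\|u\|_{\Hper^m},\|1/u\|_{\Lty}\big)\,\|u\|_{\Hper^m}\,\|u\|_{\Hper^{m+2}}$ via Moser/Gagliardo--Nirenberg product estimates and the fact that $\Hper^m$ is an algebra for $m\geq 5$ and controls $u,u_h,u_{hh},u_{hhh}$ in $\Lty$. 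After Young's inequality this yields
\[
  \frac{\ud}{\ud t}\|u\|_{\Hper^m}^2 + c\,\|u\|_{\Hper^{m+2}}^2 \leq C\big(\|u\|_{\Hper^m},\|1/u\|_{\Lty}\big)\,\|u\|_{\Hper^m}^2 .
\]
Reading the PDE directly also bounds $\|u_t\|_{\Hper^{m-4}}\leq C\big(\|u\|_{\Hper^m},\|1/u\|_{\Lty}\big)$, and since $m-4\geq 1$ the embedding $\Hper^{m-4}\hookrightarrow\Lty$ gives $|u(t,h)-u^0(h)|\leq Ct$, hence the pointwise lower bound $u\geq\beta/2$ for small $t$.

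These two facts close on each other by a continuation argument: on the maximal time interval where simultaneously $\|u\|_{\Hper^m}\leq M:=2\|u^0\|_{\Hper^m}$ and $u\geq\beta/2$, all the constants depend only on $M$ and $\beta$, so choosing $T_m=T_m(M,\beta)$ small enough the Gr\"onwall form of the energy inequality keeps $\|u\|_{\Hper^m}\leq M$ while the $u_t$ bound keeps $u\geq\beta/2$; thus the solution cannot leave this set on $[0,T_m]$. Passing to the limit in the approximation (weak--$\star$ limit in $\Lty([0,T_m];\Hper^m)$, weak limit in $L^2([0,T_m];\Hper^{m+2})$, plus strong convergence in $C([0,T_m];\Hper^{m-1})$ by Aubin--Lions, since $u_t$ is bounded in $L^2([0,T_m];\Hper^{m-4})$) lets one pass the nonlinear terms and produces a solution with the stated regularity; $u\in C([0,T_m];\Hper^m)$ strongly then follows from the standard energy--continuity argument, and \eqref{local_1} is inherited in the limit.

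For uniqueness, if $u,v$ are two such solutions with $u(0)=v(0)$, then $w:=u-v$ solves a linear fourth--order equation whose coefficients are polynomial in $u,v$ and their first three $h$--derivatives --- all bounded in $\Lty$ because $m\geq 5$ --- with strictly positive top coefficient since $u,v\geq\beta/2$. Testing with $w$, moving one derivative off the top term to form $\int_0^1 u^4(w_{hh})^2\ud h$ (absorbed on the left) and bounding all remaining terms by $\tfrac{c}{2}\|w\|_{\Hper^2}^2 + C\|w\|_{L^2}^2$, gives $\frac{\ud}{\ud t}\|w\|_{L^2}^2\leq C\|w\|_{L^2}^2$, so $w\equiv 0$ by Gr\"onwall. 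The main obstacle is precisely the self--referential structure of the a priori bounds --- the $\Hper^m$ estimate needs the positive lower bound on $u$, which in turn needs the $\Hper^m$ bound --- which is what forces the bootstrap above; a secondary technical nuisance is organizing the many lower--order nonlinear terms in the top--order energy estimate so that each genuinely carries at most $m+1$ derivatives on one factor and is therefore tamed by $\|u\|_{\Hper^m}\|u\|_{\Hper^{m+2}}$ and Young's inequality.
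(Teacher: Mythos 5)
Your proposal is correct and is exactly the ``standard'' quasilinear-parabolic argument (energy estimates in $\Hper^m$ with the dissipative term $-c\int_0^1 u^4(\partial_h^{m+2}u)^2\ud h$, an $\Hper^{m-4}\hookrightarrow\Lty$ bound on $u_t$ to propagate the lower bound $u\geq\beta/2$, a bootstrap/continuation in time, and an $L^2$ Gr\"onwall estimate for uniqueness) that the paper invokes but omits, stating only that the proof of Theorem~\ref{local_u} is standard. The only minor imprecisions — some commutator terms need an interpolation of $\|u\|_{\Hper^{m+1}}$ between $\Hper^m$ and $\Hper^{m+2}$ before Young's inequality, and the coefficients in the difference equation for uniqueness involve up to fourth (not third) derivatives of $u,v$, still bounded in $\Lty$ since $m\geq 5$ — do not affect the argument.
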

From \eqref{local_1} in Theorem \ref{local_u}, we know
\begin{equation}
u(\alpha,t)=\rho(\phi(\alpha,t),t)=h_x(\phi(\alpha,t),t)=\frac{1}{\phi_\alpha}\geq\frac{\beta}{2}>0,\quad \ale  t\in[0,T_m],\,\alpha\in [0,1].\nn
\end{equation}
Hence the formal derivation is mathematically rigorous and we have the
equivalence for local strong solution to \eqref{PDE}, \eqref{eq:hn},
\eqref{eq:rhon} and \eqref{eq:phin}. However, as far as we know, the
rigorous equivalence for global weak solution to \eqref{PDE},
\eqref{eq:hn}, \eqref{eq:rhon} and \eqref{eq:phin} is still open. It
is probably more difficult than the uniqueness of weak solution.


\end{document}